\begin{document}

\newcommand{\bfi}{\bfseries\itshape}

\newtheorem{theorem}{Theorem}
\newtheorem{acknowledgment}[theorem]{Acknowledgment}
\newtheorem{corollary}[theorem]{Corollary}
\newtheorem{definition}[theorem]{Definition}
\newtheorem{example}[theorem]{Example}
\newtheorem{lemma}[theorem]{Lemma}
\newtheorem{notation}[theorem]{Notation}
\newtheorem{proposition}[theorem]{Proposition}
\newtheorem{remark}[theorem]{Remark}

\numberwithin{theorem}{section}
\numberwithin{equation}{section}

\newcommand{\ad}{{\rm ad}}
\newcommand{\Der}{{\rm Der}\,}
\newcommand{\End}{{\rm End}\,}
\newcommand{\id}{{\rm id}}
\newcommand{\GL}{{\rm GL}}
\newcommand{\Ker}{{\rm Ker}\,}
\newcommand{\Ran}{{\rm Ran}\,}
\newcommand{\redrtimes}{\,\widetilde{\rtimes}\,}
\newcommand{\redtimes}{\,\widetilde{\times}\,}
\newcommand{\spa}{{\rm span}\,}

\newcommand{\CC}{{\mathbb C}}
\newcommand{\RR}{{\mathbb R}}

\newcommand{\Ac}{{\mathcal A}}
\newcommand{\Bc}{{\mathcal B}}
\newcommand{\Cc}{{\mathcal C}}
\newcommand{\Hc}{{\mathcal H}}
\newcommand{\Vc}{{\mathcal V}}
\newcommand{\Yc}{{\mathcal Y}}
\newcommand{\Zc}{{\mathcal Z}}

\newcommand{\Xg}{{\mathfrak X}}
\newcommand{\Yg}{{\mathfrak Y}}

\newcommand{\ag}{{\mathfrak a}}
\newcommand{\bg}{{\mathfrak b}}
\newcommand{\cg}{{\mathfrak c}}
\newcommand{\dg}{{\mathfrak d}}
\renewcommand{\gg}{{\mathfrak g}}
\newcommand{\hg}{{\mathfrak h}}
\newcommand{\kg}{{\mathfrak k}}
\newcommand{\zg}{{\mathfrak z}}

\newcommand{\ZZ}{\mathbb Z}
\newcommand{\NN}{\mathbb N}
\newcommand{\KK}{\mathbb K}

\makeatletter
\title[On Kirillov's lemma]{On Kirillov's lemma for nilpotent Lie algebras}
\author{Ingrid Belti\c t\u a and Daniel Belti\c t\u a}
\address{Institute of Mathematics ``Simion Stoilow'' 
of the Romanian Academy, 
P.O. Box 1-764, Bucharest, Romania}
\email{ingrid.beltita@gmail.com, Ingrid.Beltita@imar.ro}
\email{beltita@gmail.com,  Daniel.Beltita@imar.ro}
\keywords{nilpotent Lie algebra; coadjoint orbit}
\subjclass[2000]{Primary 17B30; Secondary 22E25, 22E27}
\date{15 January 2015}
\makeatother

\begin{abstract} 
We establish a sharpening of Kirillov's lemma on nilpotent Lie algebras with 1-dimensional center 
and use it to study the structure of 3-step nilpotent Lie algebras. 
\end{abstract}

\maketitle


\section{Introduction} 

Kirillov's famous lemma (\cite[Lemma 4.1]{Ki62}) says that 
for every real nilpotent Lie algebra $\gg$ with the 1-dimensional center~$\zg$ 
there exist an ideal $\gg_0$ and the elements $X\in\gg$, $Y\in \gg_0$ and $0\ne Z\in\zg$ 
satisfying the the Heisenberg commutation relation $[X,Y]=Z$ and in addition $\gg=\gg_0\dotplus\RR X$ 
and $\gg_0=\{V\in\gg\mid[V,Y]=0\}$. 
This observation has many deep implications in representation theory and Lie theory; 
see for instance \cite{Ki62} and \cite{CG90}. 
Let us briefly recall one of the basic reasons for studying nilpotent Lie algebras with 1-dimensional center: 
If $G$ is any connected, simply connected, nilpotent Lie group, 
then for every unitary irreducible representation $\pi\colon G\to\Bc(\Hc)$ 
there exists a closed connected subgroup $N$ of the center of $G$ such that $G/N$ has 1-dimensional center 
and $N\subseteq\Ker\pi$. 
Hence there exists a unique representation $\pi_0\colon G/N\to\Bc(\Hc)$ with $\pi(x)=\pi_0(xN)$ for every $x\in G$. 
Thus, in the study of single unitary representations of nilpotent Lie groups, one may always assume that the center of the Lie group under consideration is 1-dimensional. 

In the present paper we record a sharpening of the above result (Theorem~\ref{K0}), 
and the main results of this paper (Theorems \ref{new1} and \ref{struct}) 
show its impact on certain aspects of 
the structure theory of nilpotent Lie algebras with low nilpotency index.  
The circle of ideas approached here goes back to some methods developed in \cite{Lu83} and \cite{Ra85}, 
which are also very important for our investigation.  
We will relate this to some classification problems that have attracted a notable interest, 
witnessed by the numerous conjectures and problems left open in this area; 
see for instance 
\cite{GH93}, \cite{GK00}, \cite{DT00}, \cite{GKM04}, \cite{Bu06}, \cite{PT09}, 
\cite{CGS12}, \cite{HT13}, \cite{Mi13}, 
and the references therein. 

In Section~\ref{Sect2} we introduce some terminology that is useful for describing various classes 
of Lie algebras which occur in the paper. 
The key technical result of our paper (Theorem~\ref{K0}) 
is the aforementioned sharp version of Kirillov's lemma 
and is established in Section~\ref{Sect3} along with some auxiliary properties of the second center of a Lie algebra. 
Sections \ref{Sect4}~and~\ref{Sect5} present some applications of this theorem 
to the structure theory of 3-step nilpotent Lie algebras with 1-dimensional center, 
which in particular sheds some fresh light on the results of \cite{Ra85}. 
Also, since the present investigation has been prompted by our research on the Weyl calculus 
for square-integrable unitary representations of nilpotent Lie groups 
(see \cite{Pe94}, \cite{BB11}, \cite{BB12}), 
we pay a special attention to the characterization of the Lie algebras of this type (Theorem~\ref{flat}). 
Finally, Section~\ref{Sect6} includes the collection of examples of nilpotent Lie algebras of dimension~$\le 6$ 
over~$\RR$ 
to which our main results are applicable and which illustrate the various situations that can occur. 

\section{Terminology}\label{Sect2}

\begin{notation}
\normalfont
We denote by $\KK$ an arbitrary field of characteristic different from~2. 
Every vector space in the present paper is implicitly assumed to be a finite-dim\-en\-sional vector space over~$\KK$. 
In particular, this applies for the Lie algebras we will be working with. 

In any vector space we will denote the subset $\{0\}$ simply by $0$. 
The symbol $\dotplus$ stands for the direct sum of vector spaces, 
and we will use the notation $\ag\unlhd\gg$ for the fact that $\ag$ is an ideal of the Lie algebra~$\gg$. 
\end{notation}

\begin{notation}
\normalfont 
If $\gg$ is a Lie algebra, then its \emph{center} is  
$$\Zc(\gg):=\Cc_1\gg:=\{X\in\gg\mid[\gg,X]=0\}$$ 
while its \emph{second center} is 
$$\Cc_2\gg:=\{X\in\gg\mid[\gg,[\gg,X]]=0\}.$$
For every subset $\ag\subseteq\gg$ we also define 
$$\Cc(\ag:\gg):=\{X\in\gg\mid[\ag,X]=0\}$$
which is called the \emph{centralizer} of $\ag$ in $\gg$. 
\end{notation}

\begin{remark}
\normalfont 
For every Lie algebra $\gg$ we have $\Zc(\gg)\subseteq\Cc_2\gg$ 
and both these subsets are characteristic ideals of $\gg$ (see \cite{Bo71}). 
Moreover, $\Zc(\gg)$ is an abelian Lie algebra, while $\Cc_2\gg$ is a 2-step nilpotent Lie algebra, 
as noted in Proposition~\ref{L0.9} below. 
\end{remark}

\begin{remark}\label{L0}
\normalfont 
For later use, we recall that every 2-step nilpotent Lie algebra with 1-dim\-en\-sional center is a Heisenberg algebra. 
\qed
\end{remark}

The nilpotent Lie algebras of type~($A$) introduced below will play an important role in this paper.  

\begin{definition}\label{def_A}
\normalfont
Let $\kg$ be a nilpotent Lie algebra with $\dim\Zc(\kg)=1$. 
\begin{itemize}
\item The algebra $\kg$ is said to be \emph{of type ($A$)} if either $[\Cc_2\kg,\Cc_2\kg]=0$ or $\Cc_2\kg=\kg$. 
\item The algebra $\kg$ is said to be \emph{of type ($A_{+}$)} if either $\Cc_2\kg$ is a maximal abelian subalgebra of $\kg$ 
or $\Cc_2\kg=\kg$. 
\item A \emph{grading of type ($A_{+}$)} of $\kg$ is a triple $\gamma=(\zg,\cg,\Vc)$ such that we have the direct sum decomposition 
$\kg=\zg\dotplus\cg\dotplus\Vc$ with $\zg=\Zc(\kg)$, $[\cg,\cg]=0$, $[\Vc,\Vc]\subseteq\cg$, 
and moreover the Lie bracket of $\kg$ gives rise to a duality pairing 
$[\cdot,\cdot]\colon\cg\times\Vc\to\Zc(\kg)\simeq\KK $. 
\end{itemize}
\end{definition}

Every Lie algebra with a grading of type ($A_{+}$) is clearly 3-step nilpotent. 
The full connection between the 3-step nilpotent Lie algebras of type ($A_{+}$) and gradings of type ($A_{+}$) 
will be established in Theorem~\ref{new1} below. 
Some 3-step nilpotent Lie algebras of type~($A$) in the sense of the above definition 
were also discussed in connection with the sub-Riemannian geometry in \cite[Sect. 2]{MA11}. 

Not every nilpotent Lie algebra with 1-dimensional center is of type~($A$); 
see Example~\ref{N6N1} for a specific example in this connection. 
We will prove in Theorem~\ref{struct}\eqref{struct_item1} that in some sense
the study of nilpotent Lie algebras with 1-dimensional center reduces to the study of Lie algebras of type~($A$) 
and of the Heisenberg algebras.

\begin{remark}
\normalfont
For every Lie algebra $\kg$ we have: 
\begin{itemize}
\item $\Cc_2\kg=\kg$ in the above definition means that $\kg$ is a Heisenberg algebra;  
\item $[\Cc_2\kg,\Cc_2\kg]=0$ $\iff$ $\Cc_2\kg\subseteq\Cc(\Cc_2\kg:\kg)$; 
\item $\Cc_2\kg$ is a maximal abelian subalgebra $\iff$ $\Cc_2\kg=\Cc(\Cc_2\kg:\kg)$.  
\end{itemize}
A reference for the latter assertion is \cite[\S 4, Exerc. 6]{Bo71}.
\end{remark}

\begin{remark}
\normalfont
If $\kg$ is a 3-step nilpotent Lie algebra with $\dim\Zc(\kg)=1$ and $\Cc_2\kg\ne\kg$, 
then $[\kg,[\kg,\kg]]=\Zc(\kg)\ne0$. 

In fact, since $[\kg,[\kg,[\kg,\kg]]]=0$, it follows that $[\kg,[\kg,\kg]]\subseteq\Zc(\kg)$. 
Since $\dim\Zc(\kg)=1$, we see that we could have $[\kg,[\kg,\kg]]\ne\Zc(\kg)$ if and only if $[\kg,[\kg,\kg]]=0$.
However, if $[\kg,[\kg,\kg]]=0$, then $\Cc_2\kg=\kg$, which is a contradiction with the hypothesis. 
\end{remark}

\begin{definition}\label{compatgr}
\normalfont 
Let $\gg$ be a Lie algebra and $\kg$ and $\ag$ be two subalgebras of~$\gg$ such that $[\ag,\kg]\subseteq\kg$. 
Also assume that $\kg$ has a grading $\gamma=(\zg,\cg,\Vc)$ of type ($A_{+}$). 
We say that $\ag$ is \emph{compatible with the grading $\gamma$} if 
$[\ag,\Vc]\subseteq\cg$ and $[\ag,\zg+\cg]=0$. 
\end{definition}

\begin{remark}
\normalfont
Examples of compatibility with a grading of type ($A_{+}$) are provided by Theorem~\ref{struct}. 
\end{remark}

The following operation is the tool that we will need to piece together general nilpotent Lie algebras with 1-dimensional center 
from subalgebras of type~($A$) and Heisenberg algebras (Theorem~\ref{struct}). 

\begin{definition}\label{redsemi}
\normalfont
Let $\gg$ be a Lie algebra with $\dim\Zc(\gg)=1$ and with two subalgebras $\gg_1$ and $\gg_2$. 
We say that $\gg$ is the \emph{reduced semidirect product} of $\gg_1$ and $\gg_2$, 
and we write $\gg=\gg_1\redrtimes\gg_2$, if the following conditions are satisfied: 
\begin{itemize}
\item $[\gg_2,\gg_1]\subseteq\gg_1$; 
\item $\gg=\gg_1+\gg_2$ and $\Zc(\gg)=\gg_1\cap\gg_2$.
\end{itemize}
If also $[\gg_1,\gg_2]=0$, then we denote $\gg=\gg_1\redtimes\gg_2$ and we say that 
$\gg$ is the \emph{reduced direct product} of $\gg_1$ and $\gg_2$. 
\end{definition}

\begin{remark}
\normalfont
If $\gg=\gg_1\redrtimes\gg_2$ then $\gg_1\unlhd\gg$ and moreover we have a genuine semidirect product decomposition 
$\gg/\Zc(\gg)\simeq(\gg_1/\Zc(\gg))\rtimes(\gg_2/\Zc(\gg))$. 
\end{remark}

\begin{example}
\normalfont
For an arbitrary vector space $\Vc$ consider the associated Heisenberg algebra $\hg_{\Vc}=\KK\times\Vc^*\times\Vc$. 
Then for any vector subspaces $\Vc_1$ and $\Vc_2$ with $\Vc=\Vc_1\dotplus\Vc_2$ 
we have 
$\hg_{\Vc}=\hg_{\Vc_1}\redtimes\hg_{\Vc_2}$. 
\end{example}

\section{The generalization of Kirillov's lemma}\label{Sect3}

The main result of the present section is the following one, 
which will play a key role throughout the present paper in order to obtain 
our main results in Sections \ref{Sect4}--\ref{Sect5}, and 
is a sharpened version of the famous result known as Kirillov's lemma 
on the structure of nilpotent Lie algebras with 1-dimensional center 
(see \cite[Lemma 4.1]{Ki62} and Corollary~\ref{cor3.2} below).  

\begin{theorem}\label{K0}
Let $\gg$ be a Lie algebra with $\dim\Zc(\gg)=1$.   
Pick any nonzero $Z\in\Zc(\gg)$ and the linear subspaces $\Xg$ and $\Yg$ such that 
$$\Cc_2\gg=\Zc(\gg)\dotplus\Yg\text{ and }\gg=\Xg\dotplus\Cc(\Cc_2\gg:\gg)=\Xg\dotplus\Cc(\Yg:\gg).$$ 
Then the following assertions hold: 
\begin{enumerate}
\item\label{K0_item1} 
There exists a duality pairing $[\cdot,\cdot]\colon \Xg\times\Yg\to\Zc(\gg)\simeq\KK$ defined 
by the Lie bracket. 
Moreover $\dim(\gg/\Cc(\Cc_2\gg:\gg))=\dim(\Cc_2\gg/\Zc(\gg))$ 
and 
$$\dim\Cc_2\gg+\dim\Cc(\Cc_2\gg:\gg)=\dim\gg+1.$$ 
\item\label{K0_item2} 
If $Y_1,\dots,Y_m$ is a basis in $\Yg$, then there exist a unique basis $X_1,\dots,X_m$ in $\Xg$ 
and a unique basis $\gamma_1,\dots,\gamma_m$ in $\Cc(\Cc_2\gg:\gg)^\perp$ ($\subseteq\gg^*$) 
such that $[X_j,Y_k]=\delta_{jk}Z$ and $[X,Y_k]=\gamma_k(X)Z$ for all $j,k\in\{1,\dots,m\}$ and $X\in\gg$. 
\item\label{K0_item3} 
We have $[\gg,\gg]\subseteq\Cc(\Cc_2\gg:\gg)$ and $\Cc(\Cc_2\gg:\gg)$ is a characteristic ideal of $\gg$. 
\end{enumerate}
\end{theorem}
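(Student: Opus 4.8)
The plan is to handle the three assertions in the order \eqref{K0_item3}, \eqref{K0_item1}, \eqref{K0_item2}, since the inclusion $[\gg,\gg]\subseteq\Cc(\Cc_2\gg:\gg)$ is what powers the nondegeneracy argument in \eqref{K0_item1}. Throughout I abbreviate $C:=\Cc(\Cc_2\gg:\gg)$, and I first record that $C=\Cc(\Yg:\gg)$: because $\Cc_2\gg=\Zc(\gg)\dotplus\Yg$ and $\Zc(\gg)$ centralizes all of $\gg$, centralizing $\Cc_2\gg$ is the same as centralizing $\Yg$. This reconciles the two displayed decompositions of $\gg$.

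For \eqref{K0_item3}, the inclusion $[\gg,\gg]\subseteq C$ comes from the Jacobi identity together with the defining property of the second center: if $W\in\Cc_2\gg$ then $[\gg,W]\subseteq\Zc(\gg)$, so for any $U,V\in\gg$ the expansion $[[U,V],W]=[U,[V,W]]-[V,[U,W]]$ has both terms vanishing, as each inner bracket already lies in the center. Hence $[U,V]$ centralizes $\Cc_2\gg$, i.e. $[U,V]\in C$. That $C$ is an ideal is then immediate, since $[\gg,C]\subseteq[\gg,\gg]\subseteq C$. To see $C$ is characteristic I would differentiate the centralizer condition: for any derivation $D$ of $\gg$ and any $W\in C$, $Y\in\Cc_2\gg$, applying $D$ to $[Y,W]=0$ gives $[DY,W]+[Y,DW]=0$; since $\Cc_2\gg$ is a characteristic ideal (as recalled above) we have $DY\in\Cc_2\gg$, so $[DY,W]=0$ and therefore $[Y,DW]=0$, whence $DW\in C$.

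For \eqref{K0_item1}, note that for $X\in\Xg$, $Y\in\Yg$ one has $[X,Y]\in[\gg,\Yg]\subseteq\Zc(\gg)=\KK Z$, so writing $[X,Y]=\langle X,Y\rangle Z$ defines a bilinear form $\Xg\times\Yg\to\KK$. The real content is its nondegeneracy. On the $\Xg$ side: if $X\in\Xg$ pairs to zero with all of $\Yg$ then $X\in\Cc(\Yg:\gg)=C$, and $\Xg\cap C=0$ forces $X=0$. On the $\Yg$ side: if $Y\in\Yg$ pairs to zero with all of $\Xg$, then using $\gg=\Xg\dotplus C$ and the fact that $C$ centralizes $\Yg$, one gets $[\gg,Y]=0$, so $Y\in\Zc(\gg)\cap\Yg=0$. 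Nondegeneracy yields $\dim\Xg=\dim\Yg$, hence $\dim(\gg/C)=\dim(\Cc_2\gg/\Zc(\gg))$; the identity $\dim\Cc_2\gg+\dim C=\dim\gg+1$ then follows by substituting $\dim\Cc_2\gg=1+\dim\Yg$ and $\dim\gg=\dim\Xg+\dim C$.

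For \eqref{K0_item2}, nondegeneracy identifies $\Xg$ with $\Yg^*$, so a basis $Y_1,\dots,Y_m$ of $\Yg$ admits a unique dual basis $X_1,\dots,X_m$ in $\Xg$ with $[X_j,Y_k]=\delta_{jk}Z$. The functionals $\gamma_k$ are forced by their defining relation $[X,Y_k]=\gamma_k(X)Z$ (well defined and linear in $X$ since $[X,Y_k]\in\KK Z$), hence unique; each vanishes on $C$, so $\gamma_k\in C^\perp$, and since $\gamma_k(X_j)=\delta_{jk}$ they are linearly independent, thus a basis of the $m$-dimensional space $C^\perp$. I expect the only genuine subtlety to be the $\Yg$-side nondegeneracy in \eqref{K0_item1}, where one must use both halves of the hypothesis on $\gg$ — the direct-sum decomposition and that $C$ centralizes $\Yg$ — in order to push such a $Y$ into the center.
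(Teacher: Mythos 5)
Your proof is correct and follows essentially the same route as the paper: the same Jacobi-identity argument for $[\gg,\gg]\subseteq\Cc(\Cc_2\gg:\gg)$, the same two-sided nondegeneracy check using $\Xg\cap\Cc(\Yg:\gg)=0$ and $[\Cc(\Cc_2\gg:\gg),\Yg]=0$, and the same dual-basis reasoning for assertion (2). The only difference is cosmetic: you spell out the derivation argument showing the centralizer of a characteristic ideal is characteristic, where the paper simply cites Bourbaki.
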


\begin{proof} 
First note that $\Xg=0$ if and only if $\Yg=0$. 
Indeed, one has 
$$\Xg=0\iff\gg=\Cc(\Cc_2\gg:\gg)\iff \Cc_2\gg\subseteq\Zc(\gg)\iff\Yg=0.$$ 
In this case the assertions in the statement hold true trivially. 
So let us assume from now on that $\Xg\ne 0$ and $\Yg\ne 0$.

For Assertion~\eqref{K0_item1} note that, 
since $\Cc_2\gg=\Yg\dotplus\Zc(\gg)$, we have  
$$[\Yg,\Xg]=[\Cc_2\gg,\Xg]\subseteq[\Cc_2\gg,\gg]\subseteq\Zc(\gg), $$
where the later inclusion holds true since 
$[\gg,[\gg,\Cc_2\gg]]=0$. 
Hence we have the well-defined mapping 
$[\cdot,\cdot]\colon\Xg\times\Yg\to\Zc(\gg)\simeq\KK $. 
This bilinear mapping is non-degenerate since 
$$\Cc(\Yg:\gg)\cap\Xg=\Cc(\Yg+\Zc(\gg):\gg)\cap\Xg=\Cc(\Cc_2\gg:\gg)\cap\Xg=0$$
and on the other hand 
$$\Cc(\Xg:\gg)\cap\Yg=\Cc(\Xg+\Cc(\Cc_2\gg:\gg):\gg)\cap\Yg=\Zc(\gg)\cap\Yg=0$$
where the first equality follows since $[\Cc(\Cc_2\gg:\gg),\Yg]\subseteq[\Cc(\Cc_2\gg:\gg),\Cc_2\gg]=0$. 
The equations involving dimensions are straightforward consequences of the assertion on the duality pairing. 

Assertion~\eqref{K0_item2} follows at once by using the duality pairing provided by Assertion~\eqref{K0_item1} 
and the fact that $[\gg,\Yg]=[\gg,\Yg+\Zc(\gg)]=[\gg,\Cc_2\gg]\subseteq\Zc(\gg)$ as above. 

For Assertion~\eqref{K0_item3} we use the Jacobi identity to obtain 
$$[[\gg,\gg],\Cc_2\gg]=[[\gg,\Cc_2\gg],\gg]+[[\Cc_2\gg,\gg],\gg]=0,$$ 
hence $[\gg,\gg]\subseteq\Cc(\Cc_2\gg:\gg)$. 
By the results of \cite[\S 1, no. 6]{Bo71} we also obtain that $\Cc(\Cc_2\gg:\gg)$ is a characteristic ideal of~$\gg$, 
and this completes the proof. 
\end{proof}

\begin{corollary}[{Kirillov's lemma \cite[Lemma 4.1]{Ki62}}]\label{cor3.2}
Let $\gg$ be a nilpotent Lie algebra with $\dim\Zc(\gg)=1$.   
Then for any nonzero $Z\in\Zc(\gg)$ 
and $Y\in \Cc_2\gg\setminus \Zc(\gg)$ 
there exists $X\in \gg$   
satisfying the the Heisenberg commutation relation $[X,Y]=Z$ and in addition $\gg=\gg_0\dotplus\RR X$ 
for the ideal $\gg_0:=\{V\in\gg\mid[V,Y]=0\}$. 
\end{corollary}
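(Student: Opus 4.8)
The plan is to deduce Kirillov's lemma (Corollary~\ref{cor3.2}) directly from Theorem~\ref{K0}, since the theorem already delivers almost everything we need and Kirillov's original statement is essentially a special case. The key point is that the original lemma fixes one particular element $Y\in\Cc_2\gg\setminus\Zc(\gg)$ and produces a single $X$, whereas the theorem works with whole subspaces $\Xg$ and $\Yg$. So the first step is to bridge this gap: given the prescribed $Y$, I would choose the subspace $\Yg$ in the theorem so that it \emph{contains} $Y$ as a basis vector. Concretely, since $Y\in\Cc_2\gg\setminus\Zc(\gg)$ and $\dim\Zc(\gg)=1$, the line $\KK Y$ meets $\Zc(\gg)$ only in $0$, so I can extend to a complement $\Yg$ of $\Zc(\gg)$ in $\Cc_2\gg$ with $Y=Y_1$ the first member of a basis $Y_1,\dots,Y_m$ of $\Yg$. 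Then I fix any complement $\Xg$ of $\Cc(\Cc_2\gg:\gg)$ as required by the theorem, so the hypotheses of Theorem~\ref{K0} are satisfied.

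Next I would invoke Theorem~\ref{K0}\eqref{K0_item2} to obtain the dual basis $X_1,\dots,X_m$ of $\Xg$ with $[X_j,Y_k]=\delta_{jk}Z$. Setting $X:=X_1$, the relation $[X,Y]=[X_1,Y_1]=Z$ is exactly the Heisenberg commutation relation required by the corollary. This handles the first claim essentially for free.

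It then remains to establish the decomposition $\gg=\gg_0\dotplus\KK X$ (writing $\KK$ for the general field, noting the corollary is stated over $\RR$) for the centralizer $\gg_0=\Cc(Y:\gg)=\{V\in\gg\mid[V,Y]=0\}$, and to check that $\gg_0$ is an ideal. For the direct-sum decomposition the main computation is to show $\gg_0\cap\KK X=0$ and $\gg_0+\KK X=\gg$. The first is immediate: any multiple $cX\in\gg_0$ satisfies $[cX,Y]=c[X,Y]=cZ=0$, forcing $c=0$ since $Z\ne0$. For the spanning part I would argue by the rank of the functional $V\mapsto[V,Y]$: because $[X,Y]=Z$ this functional $\gg\to\Zc(\gg)\simeq\KK$ is surjective, so its kernel $\gg_0$ has codimension one, and together with $\gg_0\cap\KK X=0$ this gives $\gg=\gg_0\dotplus\KK X$.

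The step I expect to need the most care is verifying that $\gg_0$ is an \emph{ideal}, since this uses the full strength of $Y$ lying in the second center rather than being an arbitrary element. The clean way is the Jacobi identity: for $V\in\gg_0$ and any $W\in\gg$ one has $[[W,V],Y]=[[W,Y],V]+[W,[V,Y]]$, and $[V,Y]=0$ kills the second term while $[W,Y]\in\Zc(\gg)$ (because $Y\in\Cc_2\gg$) makes the first term $[\Zc(\gg),V]=0$. Hence $[W,V]\in\gg_0$, so $[\gg,\gg_0]\subseteq\gg_0$, which is the ideal property. This is the genuinely new ingredient beyond the bare theorem and is precisely where the hypothesis $Y\in\Cc_2\gg$ rather than $Y\in\gg$ is indispensable.
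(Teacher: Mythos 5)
Your proposal is correct and follows essentially the same route as the paper: both proofs specialize Theorem~\ref{K0} by extending $Y$ to a basis $Y_1=Y,\dots,Y_m$ of a complement $\Yg$ of $\Zc(\gg)$ in $\Cc_2\gg$ and setting $X:=X_1$ from the dual basis. The paper's proof is terser (it leaves the direct-sum decomposition implicit and only remarks that the ideal property follows from $Y\in\Cc_2\gg$), whereas you correctly fill in the codimension-one argument for $\gg=\gg_0\dotplus\KK X$ and the Jacobi-identity verification that $\gg_0$ is an ideal.
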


\begin{proof} 
Since $\gg$ is a nilpotent Lie algebra, one has $\Zc(\gg)\subsetneqq\Cc_2\gg$, 
hence we may use Theorem~\ref{K0} with $m\ge 1$, and we may choose $Y_1:=Y$, and $X:=X_1$. 
The fact that $\gg_0$ is an ideal of $\gg$ is a direct consequence of the hypothesis $Y\in \Cc_2\gg$. 
\end{proof}

\begin{remark}
\normalfont
Theorem~\ref{K0} can be regarded as a sharpened version of Kirillov's lemma for the following reason:  
The conclusion of Theorem~\ref{K0} provides not only a 3-dimensional Heisenberg subalgebra of $\gg$, 
but a $(2m+1)$-dimensional Heisenberg subalgebra $\spa\{X_1,\dots,X_m,Y_1,\dots,Y_m,Z\}$, 
where the integer $m\ge 1$ is the greatest integer satisfying 
$$[\gg,\spa\{Y_1,\dots,Y_m\}]\subseteq\Zc(\gg) 
\text{ and }
\spa\{Y_1,\dots,Y_m\}\cap \Zc(\gg)=0.$$
In fact, for ant $Y\not\in\spa\{Y_1,\dots,Y_m\}+\Zc(\gg)=\Cc_2\gg$ 
we have $[\gg,[\gg,Y]]\ne0$, that is, $[\gg,Y]\not\subset\Zc(\gg)$. 
\end{remark}

\begin{remark}
\normalfont
Assertion~\eqref{K0_item1} in the above Theorem~\ref{K0} can also be regarded 
as a special case of the following general phenomenon. 
Let $\Ac$ and $\Bc$ be finite-dimensional vector spaces and $\Psi\colon\Ac\times\Bc\to\KK$ 
be a bilinear form. 
If we denote 
$$\begin{aligned}
\Ac_0:=&\Ac^{\perp_\Psi}=\{a\in\Ac\mid(\forall b\in\Bc)\ \Psi(a,b)=0\}, \\
\Bc_0:=&\Bc^{\perp_\Psi}=\{b\in\Bc\mid(\forall a\in\Ac)\ \Psi(a,b)=0\}
\end{aligned}$$
then $\Psi$ gives rise to a duality pairing 
$$(\Ac/\Ac_0)\times(\Bc/\Bc_0)\to\KK,\quad (a+\Ac_0,b+\Bc_0)\mapsto\Psi(a,b).$$
In particular, in the setting of Theorem~\ref{K0}, we obtain an isomorphism of vector spaces  
$\gg/\Cc(\Cc_2\gg:\gg)\simeq(\Cc_2\gg/\Zc(\gg))^*$ provided that $\dim\Zc(\gg)=1$. 
\end{remark}

\subsection*{Some further properties of $\Cc_2\gg$}

\begin{proposition}\label{L0.4}
Let $\gg$ be a Lie algebra with   
a subalgebra~$\dg$ satisfying the conditions $[\dg,\dg]\subseteq\Zc(\gg)\subseteq\dg\subseteq\gg$.  
Then either $[\dg,\dg]=0$ or there exists a 2-step nilpotent subalgebra $\hg\subseteq\dg$ such that 
$\Zc(\hg)=\Zc(\gg)$, 
$\hg+\Zc(\dg)=\dg$, and $\hg\cap\Zc(\dg)=\Zc(\gg)$. 
\end{proposition}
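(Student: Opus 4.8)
The plan is to reduce everything to a vector-space splitting argument, exploiting the fact that the hypotheses force $\dg$ to be at most 2-step nilpotent and force $\Zc(\gg)$ to sit inside $\Zc(\dg)$. First I would record these two structural observations: since $\Zc(\gg)$ is central in all of $\gg$ and is contained in $\dg$, it is central in $\dg$, so $\Zc(\gg)\subseteq\Zc(\dg)$; and since $[\dg,\dg]\subseteq\Zc(\gg)\subseteq\Zc(\dg)$, centrality gives $[\dg,[\dg,\dg]]\subseteq[\dg,\Zc(\gg)]=0$, so $\dg$ is 2-step nilpotent. The dichotomy in the statement then simply separates the abelian case $[\dg,\dg]=0$ from the genuinely 2-step case, and I would assume $[\dg,\dg]\ne0$ from here on.

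Next I would construct $\hg$ by choosing two vector-space complements. Choose $\Ac$ with $\Zc(\dg)=\Zc(\gg)\dotplus\Ac$, and choose $\Vc$ with $\dg=\Zc(\dg)\dotplus\Vc$; together these give the direct sum $\dg=\Zc(\gg)\dotplus\Ac\dotplus\Vc$. I would then define $\hg:=\Zc(\gg)\dotplus\Vc$. To see that $\hg$ is a subalgebra it suffices to note $[\hg,\hg]=[\Vc,\Vc]\subseteq[\dg,\dg]\subseteq\Zc(\gg)\subseteq\hg$, where the first equality uses that $\Zc(\gg)$ is central; the same computation yields $[\hg,[\hg,\hg]]\subseteq[\dg,\Zc(\gg)]=0$, so $\hg$ is 2-step nilpotent. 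Moreover, since $\Ac\subseteq\Zc(\dg)$ is central in $\dg$, one has $[\dg,\dg]=[\Vc,\Vc]=[\hg,\hg]\ne0$, so $\hg$ inherits the nonzero bracket and is genuinely 2-step nilpotent.

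The span and intersection conditions are then immediate from the direct-sum decomposition: $\hg+\Zc(\dg)=(\Zc(\gg)\dotplus\Vc)+(\Zc(\gg)\dotplus\Ac)=\dg$, and by directness of $\Zc(\gg)\dotplus\Ac\dotplus\Vc$ any element of $\hg\cap\Zc(\dg)$ must lie in $\Zc(\gg)$, whence $\hg\cap\Zc(\dg)=\Zc(\gg)$.

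The one step requiring genuine argument — and where I expect to spend the most care — is the identity $\Zc(\hg)=\Zc(\gg)$. The inclusion $\Zc(\gg)\subseteq\Zc(\hg)$ is clear since $\Zc(\gg)\subseteq\hg$ is central in $\gg$. For the reverse, I would take $X\in\Zc(\hg)$ and write $X=z+v$ with $z\in\Zc(\gg)$ and $v\in\Vc$; then $[v,\hg]=0$, so $v$ commutes with $\Vc$ and with $\Zc(\gg)$. The crucial point is that $v$ also commutes with $\Ac$ automatically, because $\Ac\subseteq\Zc(\dg)$; hence $v$ commutes with all of $\dg=\Zc(\gg)\dotplus\Ac\dotplus\Vc$, i.e. $v\in\Zc(\dg)$. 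Since $v\in\Vc$ and $\Vc\cap\Zc(\dg)=0$, this forces $v=0$ and $X\in\Zc(\gg)$. This closes the argument; the subtlety is precisely the use of $\Ac\subseteq\Zc(\dg)$, which is what allows commutation with the complement $\Vc$ to propagate to commutation with all of $\dg$.
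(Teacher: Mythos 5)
Your proof is correct and follows essentially the same route as the paper's: both choose a linear complement of $\Zc(\dg)$ in $\dg$ (your $\Vc$, the paper's $\hg_0$), set $\hg:=\Zc(\gg)\dotplus\Vc$, and verify $\Zc(\hg)=\Zc(\gg)$ by observing that a central element of $\hg$ lying in the complement also commutes with $\Zc(\dg)$ and hence must vanish. The auxiliary complement $\Ac$ you introduce is implicit in the paper's argument and changes nothing of substance.
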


\begin{proof}
Let us assume that $[\dg,\dg]\ne0$  
and let $\hg_0$ be any linear subspace of $\dg$ such that 
we have the direct sum decomposition 
$\hg_0\dotplus\Zc(\dg)= \dg$. 
Then $\hg:=\hg_0+\Zc(\gg)$ is an algebra that satisfies the conditions. 
In order to see this, note that $\Zc(\gg)\subseteq\Zc( \dg)$, hence $\hg_0\cap\Zc(\gg)=0$, 
and then we have the direct sum decomposition $\hg=\hg_0\dotplus\Zc(\gg)$. 
Since $\hg_0\dotplus\Zc( \dg)=\dg$, it then easily follows that 
$\hg+\Zc(\dg)=\dg$  and $\hg\cap\Zc(\dg)=\Zc(\gg)$. 

Moreover, 
$\hg$ is closed under the Lie bracket since 
$[\hg,\hg]\subseteq[\dg,\dg]=\Zc(\gg)\subseteq\hg$. 
Also $\hg$ is 2-step nilpotent, 
and we have $\Zc(\gg)\subseteq\Zc(\hg)\subseteq\hg=\hg_0\dotplus\Zc(\gg)$, 
hence $\Zc(\hg)=(\Zc(\hg)\cap \hg_0)\dotplus\Zc(\gg)$. 
On the other hand, since $\hg_0+\Zc(\dg)=\dg$ 
and $[\dg, \Zc(\dg)]=0$, it follows that $[\Zc(\hg)\cap \hg_0,\dg]=0$, 
that is, $\Zc(\hg)\cap \hg_0\subseteq\Zc(\dg)$. 
Since $\hg_0\cap\Zc(\dg)=0$, it then follows that $\Zc(\hg)\cap \hg_0=0$, 
hence $\Zc(\hg)=0\dotplus\Zc(\gg)=\Zc(\gg)$. 
This completes the proof. 
\end{proof}

\begin{corollary}\label{L0.5}
Let $\gg$ be a Lie algebra with $\dim\Zc(\gg)=1$. 
If $\dg$ is a subalgebra of $\gg$ such that $[\dg,\dg]\subseteq\Zc(\gg)$, 
then either $[\dg,\dg]=0$ or there exists a subalgebra $\hg\subseteq\dg$ such that $\hg$ is a Heisenberg algebra, 
$\hg+\Zc(\dg)=\dg$, and $\hg\cap\Zc(\dg)=\Zc(\gg)$. 
\end{corollary}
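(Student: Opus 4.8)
The plan is to reduce Corollary~\ref{L0.5} directly to Proposition~\ref{L0.4}, since the hypotheses match up almost exactly once I verify the missing inclusion. The corollary assumes $\dim\Zc(\gg)=1$ and that $\dg$ is a subalgebra with $[\dg,\dg]\subseteq\Zc(\gg)$, whereas Proposition~\ref{L0.4} requires the stronger-looking chain $[\dg,\dg]\subseteq\Zc(\gg)\subseteq\dg\subseteq\gg$. So first I would check that $\Zc(\gg)\subseteq\dg$ holds automatically under the corollary's hypotheses, so that Proposition~\ref{L0.4} applies. This is where a small argument is needed rather than pure bookkeeping.

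To see $\Zc(\gg)\subseteq\dg$, I would split into the two cases that the conclusion itself distinguishes. If $[\dg,\dg]=0$, there is nothing to prove (the first alternative of the conclusion holds). So assume $[\dg,\dg]\neq 0$. Then there exist $D_1,D_2\in\dg$ with $0\neq[D_1,D_2]\in[\dg,\dg]\subseteq\Zc(\gg)$. Since $\dim\Zc(\gg)=1$, this single nonzero element $[D_1,D_2]$ spans $\Zc(\gg)$; as it lies in $\dg$, we get $\Zc(\gg)\subseteq\dg$. Thus in the nontrivial case the full hypothesis of Proposition~\ref{L0.4} is satisfied.

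Now I would invoke Proposition~\ref{L0.4}. In the case $[\dg,\dg]\neq 0$ it produces a $2$-step nilpotent subalgebra $\hg\subseteq\dg$ with $\Zc(\hg)=\Zc(\gg)$, together with $\hg+\Zc(\dg)=\dg$ and $\hg\cap\Zc(\dg)=\Zc(\gg)$. The last two relations are exactly the conclusions wanted in Corollary~\ref{L0.5}, so the only remaining point is to upgrade ``$\hg$ is $2$-step nilpotent with $\Zc(\hg)=\Zc(\gg)$'' to ``$\hg$ is a Heisenberg algebra.'' Here I would use $\dim\Zc(\gg)=1$: since $\Zc(\hg)=\Zc(\gg)$ we have $\dim\Zc(\hg)=1$, and a $2$-step nilpotent Lie algebra with $1$-dimensional center is a Heisenberg algebra, as recorded in Remark~\ref{L0}. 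This identifies $\hg$ as Heisenberg and completes the second alternative of the conclusion.

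The main obstacle, if any, is purely the verification that $\Zc(\gg)\subseteq\dg$ in the nontrivial case, since that inclusion is not assumed in the corollary but is needed to apply the proposition; once the $1$-dimensionality of the center is exploited to place a nonzero bracket inside $\dg$, everything else is a direct quotation of Proposition~\ref{L0.4} followed by the elementary classification in Remark~\ref{L0}. I expect the write-up to be short, essentially a two-case split with the bulk of the work already done in the proposition.
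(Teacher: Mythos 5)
Your proposal is correct and follows the paper's own proof essentially verbatim: establish $\Zc(\gg)\subseteq\dg$ from $\dim\Zc(\gg)=1$ in the nontrivial case, invoke Proposition~\ref{L0.4}, and identify $\hg$ as a Heisenberg algebra via Remark~\ref{L0}. Your write-up is in fact slightly more careful than the paper's, which states $\Zc(\gg)=[\dg,\dg]$ without explicitly setting aside the trivial case $[\dg,\dg]=0$.
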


\begin{proof}
Since $[\dg,\dg]\subseteq\Zc(\gg)$ and $\dim\Zc(\gg)=1$, it follows that $\Zc(\gg)=[\dg,\dg]\subseteq\dg$. 
Hence we can use Proposition~\ref{L0.4} to obtain the subalgebra~$\hg$, 
and Remark~\ref{L0} ensures that $\hg$ is a Heisenberg algebra. 
\end{proof}

\begin{proposition}\label{L0.9}
If $\gg$ is a Lie algebra then the following assertions hold: 
\begin{enumerate}
\item
The ideal $\Cc_2\gg$ of $\gg$ is a 2-step nilpotent Lie algebra. 
\item 
Either $[\Cc_2\gg,\Cc_2\gg]=0$ or there exists a subalgebra $\hg$ such that 
$\Zc(\hg)=\Zc(\gg)$, 
$\hg+\Zc(\Cc_2\gg)=\Cc_2\gg$,  and $\hg\cap\Zc(\Cc_2\gg)=\Zc(\gg)$.
\end{enumerate}
\end{proposition}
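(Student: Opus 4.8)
The plan is to derive both assertions from a single observation about the second center, namely that $[\gg,\Cc_2\gg]\subseteq\Zc(\gg)$. This follows immediately from the definition: if $X\in\Cc_2\gg$ then $[\gg,[\gg,X]]=0$, so for any $A\in\gg$ the element $[A,X]$ is annihilated by $\ad Y$ for every $Y\in\gg$, whence $[A,X]\in\Zc(\gg)$. I would record this as the first line of the proof, since it drives everything else.

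For assertion~(1), I already know (from the earlier remark) that $\Cc_2\gg$ is an ideal, hence a Lie subalgebra, so only its $2$-step nilpotency remains. Using the observation above I would chain the inclusions
\[
[\Cc_2\gg,\Cc_2\gg]\subseteq[\gg,\Cc_2\gg]\subseteq\Zc(\gg)\subseteq\Zc(\Cc_2\gg),
\]
where the last containment holds because $\Zc(\gg)$ commutes with all of $\gg\supseteq\Cc_2\gg$ and is contained in $\Cc_2\gg$. Thus $[\Cc_2\gg,\Cc_2\gg]$ lies in the center of $\Cc_2\gg$, which is precisely the statement that $\Cc_2\gg$ is $2$-step nilpotent.

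For assertion~(2), the idea is to invoke Proposition~\ref{L0.4} with $\dg:=\Cc_2\gg$. To do so I must check its hypotheses $[\dg,\dg]\subseteq\Zc(\gg)\subseteq\dg\subseteq\gg$: the middle inclusion is the known containment $\Zc(\gg)\subseteq\Cc_2\gg$, the last is trivial, and the first, $[\Cc_2\gg,\Cc_2\gg]\subseteq\Zc(\gg)$, is exactly the inclusion established for~(1). With the hypotheses in place, the conclusion of Proposition~\ref{L0.4} reads verbatim as the dichotomy in~(2): either $[\Cc_2\gg,\Cc_2\gg]=0$, or there is a subalgebra $\hg\subseteq\Cc_2\gg$ with $\Zc(\hg)=\Zc(\gg)$, $\hg+\Zc(\Cc_2\gg)=\Cc_2\gg$, and $\hg\cap\Zc(\Cc_2\gg)=\Zc(\gg)$.

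The argument is short and presents no serious obstacle; the only point that needs care is that Proposition~\ref{L0.4} demands $[\dg,\dg]\subseteq\Zc(\gg)$ and not merely $[\dg,\dg]\subseteq\Zc(\dg)$. A naive computation inside $\Cc_2\gg$ via the Jacobi identity only yields the weaker statement $[\Cc_2\gg,\Cc_2\gg]\subseteq\Zc(\Cc_2\gg)$, so the essential step is to pass through the ambient bracket $[\gg,\Cc_2\gg]$ in order to land in $\Zc(\gg)$. Once this sharper containment is in hand, both assertions are immediate.
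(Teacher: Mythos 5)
Your proof is correct and follows essentially the same route as the paper: both establish $[\Cc_2\gg,\Cc_2\gg]\subseteq[\gg,\Cc_2\gg]\subseteq\Zc(\gg)$ from $[\gg,[\gg,\Cc_2\gg]]=0$ and then invoke Proposition~\ref{L0.4} with $\dg=\Cc_2\gg$. Your closing remark correctly identifies the one point requiring care, namely that the hypothesis of Proposition~\ref{L0.4} needs containment in $\Zc(\gg)$ rather than merely in $\Zc(\Cc_2\gg)$.
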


\begin{proof}
The first part is well known and follows by  
$[\Cc_2\gg,[\Cc_2\gg,\Cc_2\gg]]\subseteq[\gg,[\gg,\Cc_2\gg]]=0$. 

For the second assertion we note that 
$[\Cc_2\gg,\Cc_2\gg]\subseteq[\gg,\Cc_2\gg]\subseteq\Zc(\gg)$, 
where the later inclusion follows since  $[\gg,[\gg,\Cc_2\gg]]=0$. 
We then see that Proposition~\ref{L0.4} can be applied with $\dg=\Cc_2\gg$, 
and the conclusion follows. 
\end{proof}

\begin{corollary}\label{L1}
If $\dim\Zc(\gg)=1$, then either $[\Cc_2\gg,\Cc_2\gg]=0$ or there exists a Heisenberg algebra $\hg$ such that 
$\hg+\Zc(\Cc_2\gg)=\Cc_2\gg$  and $\hg\cap\Zc(\Cc_2\gg)=\Zc(\gg)$. 
\end{corollary}

\begin{proof}
Use Proposition~\ref{L0.9} along with Remark~\ref{L0}. 
\end{proof}

For later use, we now provide a generalization of \cite[proof of Th. 3.1, Step~2]{Ra85}.

\begin{proposition}\label{L2}
Let $\gg$ be a Lie algebra with a subalgebra $\hg$ with   
$\Zc(\gg)\subseteq\hg\subseteq\Cc_2\gg$.  
Then the following assertions hold:  
\begin{enumerate}
\item\label{L2_item1} 
We have $[\gg,\hg]\subseteq\Zc(\gg)$ and $\hg\unlhd\gg$.    
\item\label{L2_item4}  
We have $\Zc(\Cc_2\gg)\subseteq\Cc_2(\Cc(\hg:\gg))$. 
If we moreover assume $\hg+\Zc(\Cc_2\gg)=\Cc_2\gg$, 
then we actually have $\Zc(\Cc_2\gg)\subseteq\Cc_2(\Cc(\hg:\gg))\subseteq\Cc(\Cc_2\gg:\gg)$.
\item\label{L2_item5} If $\hg+\Zc(\Cc_2\gg)=\Cc_2\gg$ and $\hg+\Cc(\hg:\gg)=\gg$, 
then $\Zc(\Cc_2\gg)=\Cc_2(\Cc(\hg:\gg))$. 
\item\label{L2_item2}  
If $\dim\Zc(\gg)=1$ and $\hg$ is a Heisenberg algebra, then $\hg+\Cc(\hg:\gg)=\gg$  
and $\hg\cap\Cc(\hg:\gg)=\Zc(\gg)=\Zc(\Cc(\hg:\gg))$. 
\end{enumerate}
\end{proposition}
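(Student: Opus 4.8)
The plan is to treat the four assertions in the order \eqref{L2_item1}, \eqref{L2_item4}, \eqref{L2_item5}, \eqref{L2_item2}, exploiting throughout that membership in $\Cc_2\gg$ forces the relevant adjoint maps to land in $\Zc(\gg)$; I write $\cg:=\Cc(\hg:\gg)$ and $\zg_2:=\Zc(\Cc_2\gg)$. For \eqref{L2_item1} the point is simply that $\hg\subseteq\Cc_2\gg$ gives $[\gg,[\gg,\hg]]=0$, hence $[\gg,\hg]\subseteq\Zc(\gg)$; since $\Zc(\gg)\subseteq\hg$ this already yields $[\gg,\hg]\subseteq\hg$, i.e. $\hg\unlhd\gg$. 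The first inclusion in \eqref{L2_item4} is equally direct: for $W\in\zg_2$ one has $[\hg,W]\subseteq[\Cc_2\gg,W]=0$, so $W\in\cg$, while $W\in\Cc_2\gg$ gives $[\cg,W]\subseteq[\gg,W]\subseteq\Zc(\gg)$ and hence $[\cg,[\cg,W]]\subseteq[\cg,\Zc(\gg)]=0$, so $W\in\Cc_2(\cg)$.

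The second inclusion in \eqref{L2_item4} is the main obstacle, and I would isolate it as follows. First I record the identity $\zg_2=\Cc_2\gg\cap\cg$, valid under the hypothesis $\hg+\zg_2=\Cc_2\gg$: for $w\in\Cc_2\gg$ the center $\zg_2$ is annihilated, so $[w,\Cc_2\gg]=[w,\hg]$, whence $w$ is central in $\Cc_2\gg$ exactly when $[w,\hg]=0$. Consequently $\Cc(\Cc_2\gg:\gg)=\cg\cap\Cc(\zg_2:\gg)$, and since $\Cc_2(\cg)\subseteq\cg$ the desired inclusion $\Cc_2(\cg)\subseteq\Cc(\Cc_2\gg:\gg)$ reduces to the single bracket identity $[\zg_2,\Cc_2(\cg)]=0$. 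This is the crux: a priori, for $W\in\zg_2$ and $V\in\Cc_2(\cg)$ one only gets $[W,V]\in\Zc(\gg)\cap\Zc(\cg)$ (the first membership from $W\in\Cc_2\gg$, the second from $V\in\Cc_2(\cg)$ together with $W\in\cg$), and forcing this central element to vanish is exactly where the finer structure of $\hg$ must enter — namely the property $\hg\cap\zg_2=\Zc(\gg)$ enjoyed by the Heisenberg subalgebra of Corollary~\ref{L1}. I expect this step to require either exhibiting $V$ inside $\Cc_2\gg$ (so that centrality of $\zg_2$ in $\Cc_2\gg$ finishes it at once) or an $\ad W$--derivation argument: $\ad W$ restricts to a derivation of $\cg$ with image in $\Zc(\cg)$ that annihilates $[\cg,\cg]$, $\Zc(\cg)$ and $\Cc_2\gg\cap\cg$, and one must upgrade this to the statement that $\ad W$ kills the whole second center $\Cc_2(\cg)$.

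For \eqref{L2_item5} I would prove the reverse inclusion $\Cc_2(\cg)\subseteq\zg_2$ directly, the forward one being the first part of \eqref{L2_item4}; here the extra hypothesis $\hg+\cg=\gg$ does the work and also bypasses the obstacle above. First, $\gg=\hg+\cg$ forces $\Zc(\cg)=\Zc(\gg)$: any $c\in\Zc(\cg)$ satisfies $[\cg,c]=0$ and $[\hg,c]=0$ (as $c\in\cg$), hence $[\gg,c]=0$. Then for $V\in\Cc_2(\cg)$ and arbitrary $G=H+C\in\hg+\cg$ one has $[G,V]=[C,V]\in\Zc(\cg)=\Zc(\gg)$, so $[\gg,[\gg,V]]\subseteq[\gg,\Zc(\gg)]=0$ and therefore $V\in\Cc_2\gg$. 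Finally $[\Cc_2\gg,V]=[\hg,V]+[\zg_2,V]=0$, the first term vanishing because $V\in\cg$ and the second because $\zg_2$ is the center of $\Cc_2\gg$ and now $V\in\Cc_2\gg$; hence $V\in\zg_2$, giving the equality.

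For \eqref{L2_item2} the key is to establish $\hg+\cg=\gg$ from nondegeneracy of the Heisenberg bracket. Since $\dim\Zc(\gg)=1$ and $\Zc(\gg)\subseteq\Zc(\hg)$ with $\dim\Zc(\hg)=1$, we have $\Zc(\gg)=\Zc(\hg)=:\KK Z$. By \eqref{L2_item1}, $[\gg,\hg]\subseteq\KK Z$, so the bracket defines a linear map $\gg\to\hg^{*}$, $G\mapsto\beta_G$ with $[G,H]=\beta_G(H)Z$, whose kernel is precisely $\cg$. Every $\beta_G$ annihilates $Z$, and the restriction of this map to $\hg$ already surjects onto $(\KK Z)^{\perp}\subseteq\hg^{*}$ because the induced alternating form on $\hg/\KK Z$ is nondegenerate; hence the images of $\gg$ and of $\hg$ coincide, giving $\gg=\hg+\cg$. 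The remaining identities are then formal: $\hg\cap\cg=\{H\in\hg:[\hg,H]=0\}=\Zc(\hg)=\KK Z=\Zc(\gg)$, while $\Zc(\cg)=\Zc(\gg)$ follows from $\gg=\hg+\cg$ exactly as in \eqref{L2_item5}. Note that once $\gg=\hg+\cg$ is in hand, \eqref{L2_item5} applies and recovers the second inclusion of \eqref{L2_item4} in this Heisenberg setting, which is the sense in which the delicate step of the second paragraph is ultimately controlled.
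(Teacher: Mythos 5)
Your arguments for assertions \eqref{L2_item1}, \eqref{L2_item5}, \eqref{L2_item2} and for the first inclusion in \eqref{L2_item4} are correct and essentially match the paper's (for \eqref{L2_item2} the paper likewise realizes the functional $[X,\cdot]\in\hg^*$ by an element of $\hg$ via nondegeneracy of the Heisenberg form on $\hg/\Zc(\hg)$). Your treatment of \eqref{L2_item5} is in fact a genuinely different and arguably cleaner route: the paper first invokes the second inclusion of \eqref{L2_item4} to place $X\in\Cc_2(\Cc(\hg:\gg))$ inside $\Cc(\Cc_2\gg:\gg)$ and only then checks $X\in\Cc_2\gg$, whereas you derive $\Zc(\Cc(\hg:\gg))=\Zc(\gg)$ and $\Cc_2(\Cc(\hg:\gg))\subseteq\Cc_2\gg$ directly from $\gg=\hg+\Cc(\hg:\gg)$ and conclude by $[\Cc_2\gg,X]=[\hg,X]+[\Zc(\Cc_2\gg),X]=0$; this makes \eqref{L2_item5} independent of the delicate half of \eqref{L2_item4}.

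The genuine gap is the second inclusion $\Cc_2(\Cc(\hg:\gg))\subseteq\Cc(\Cc_2\gg:\gg)$ in \eqref{L2_item4}: you correctly reduce it to the single identity $[\Zc(\Cc_2\gg),\Cc_2(\Cc(\hg:\gg))]=0$, but then only sketch two possible strategies without carrying either out, so as a proof of the proposition as stated your text is incomplete at exactly this point. The paper disposes of this step by the chain $[\Cc_2(\Cc(\hg:\gg)),\Cc_2\gg]=[\Cc_2(\Cc(\hg:\gg)),\hg+\Zc(\Cc_2\gg)]\subseteq[\Cc(\hg:\gg),\hg]+[\Cc_2\gg,\Zc(\Cc_2\gg)]=0$, whose second estimate rests on the containment $\Cc_2(\Cc(\hg:\gg))\subseteq\Cc_2\gg$ --- precisely the first of the two routes you anticipated (``exhibiting $V$ inside $\Cc_2\gg$''). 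Be aware that this containment is not separately justified in the paper at that stage either; it does follow once $\hg+\Cc(\hg:\gg)=\gg$ (your own computation in \eqref{L2_item5} proves it then), and that extra hypothesis is available in every place the proposition is applied (assertions \eqref{L2_item5} and \eqref{L2_item2}, and Theorem~\ref{struct}\eqref{struct_item1}), which is the sense in which your closing remark after \eqref{L2_item2} does control the difficulty. Still, to match the statement as written you would need to either supply the missing bracket identity under the weaker hypothesis $\hg+\Zc(\Cc_2\gg)=\Cc_2\gg$ alone, or reproduce the paper's chain of inclusions together with a justification of $\Cc_2(\Cc(\hg:\gg))\subseteq\Cc_2\gg$.
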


\begin{proof}
For Assertion~\eqref{L2_item1} 
note that $[\gg,[\gg,\hg]]\subseteq[\gg,[\gg,\Cc_2\gg]]=0$. 
Therefore $[\gg,\hg]\subseteq\Zc(\gg)\subseteq\hg$, hence $\hg\unlhd\gg$.

For the first part of Assertion~\eqref{L2_item4}  
note that 
$[\Zc(\Cc_2\gg),\hg]\subseteq[\Zc(\Cc_2\gg),\Cc_2\gg]=0$, 
hence $\Zc(\Cc_2\gg)\subseteq\Cc(\hg:\gg)$. 
Moreover, 
$[\Cc(\hg:\gg),[\Cc(\hg:\gg),\Zc(\Cc_2\gg)]]\subseteq[\gg,[\gg,\Cc_2\gg]]=0$, 
hence we have $\Zc(\Cc_2\gg)\subseteq\Cc_2(\Cc(\hg:\gg))$. 
For the second part of Assertion \eqref{L2_item4} 
let us assume that we have $\hg+\Zc(\Cc_2\gg)=\Cc_2\gg$. 
Then 
\begin{align}
[\Cc_2(\Cc(\hg:\gg)),\Cc_2\gg]
&=[\Cc_2(\Cc(\hg:\gg)),\hg+\Zc(\Cc_2\gg)] \nonumber\\
&\subseteq [\Cc_2(\Cc(\hg:\gg)),\hg] +[\Cc_2(\Cc(\hg:\gg)),\Zc(\Cc_2\gg)] \nonumber\\
&\subseteq [\Cc(\hg:\gg),\hg] +[\Cc_2\gg,\Zc(\Cc_2\gg)] \nonumber\\
&=0\nonumber
\end{align}
that is, $\Cc_2(\Cc(\hg:\gg))\subseteq\Cc(\Cc_2\gg:\gg)$. 

For Assertion~\eqref{L2_item5} it suffices to prove the inclusion $\supseteq$ 
since the converse inclusion follows by Assertion \eqref{L2_item4}. 
Let $X\in\Cc_2(\Cc(\hg:\gg))$ arbitrary. 
Again by Assertion \eqref{L2_item4} we have $X\in \Cc(\Cc_2\gg:\gg)$ hence, 
in order to prove that $X\in\Zc(\Cc_2\gg)$, it suffices to check that $X\in\Cc_2\gg$. 
And this is indeed the case since by using the hypothesis $\hg+\Cc(\hg:\gg)=\gg$ we obtain 
$$\begin{aligned}{}
[\gg,[\gg,X]]
&=[\hg+\Cc(\hg:\gg),[\hg+\Cc(\hg:\gg),X]] \\
&=[\hg+\Cc(\hg:\gg),[\hg,X]]
+[\hg,[\Cc(\hg:\gg),X]]
+[\Cc(\hg:\gg),[\Cc(\hg:\gg),X]]
\end{aligned}$$
and here the first term vanishes since $[\gg,X]\subseteq\Zc(\gg)$ by Assertion~\eqref{L2_item1}, 
the second term vanishes since $X\in\Cc(\hg:\gg)$ hence $[\Cc(\hg:\gg),X]\subseteq\Cc(\hg:\gg)$, 
and the third vanishes since actually $X\in\Cc_2(\Cc(\hg:\gg))$. 

For the first equality in Assertion \eqref{L2_item2} it suffices to prove the inclusion $\supseteq$. 
Hence for arbitrary $X\in\gg$ we have to prove that $X\in \hg+\Cc(\hg:\gg)$. 
To this end let us pick $Z_0\in\Zc(\gg)$ with $Z_0\ne0$, so that $\Zc(\gg)=\KK Z_0$. 
We have seen above that $[\gg,\hg]\subseteq\Zc(\gg)$, hence $[X,\hg]\subseteq\KK Z_0$, 
and then there exists $\lambda\in\hg^*$ such that for every $Y\in\hg$ we have $[X,Y]=\lambda(Y)Z_0$. 
On the other hand, 
since $\lambda\in\hg^*$ and $\hg$ is a finite-dimensional Heisenberg algebra with the center equal to $\KK Z_0$, 
it is easily checked that there exists $V\in\hg$ such that for every $Y\in\hg$ we have $[V,Y]=\lambda(Y)Z_0$. 
Therefore $[X-V,\hg]=0$, and then $X\in V+\Cc(\hg:\gg)\subseteq\hg+\Cc(\hg:\gg)$. 

Furthermore, 
it is obvious that $\hg\cap\Cc(\hg:\gg)=\Zc(\hg)$, 
and we have $\Zc(\hg)=\Zc(\gg)$ directly by the hypothesis. 
For the last equality, 
note that $[\Zc(\Cc(\hg:\gg)),\hg]\subseteq[\Cc(\hg:\gg),\hg]=0$ 
and $[\Zc(\Cc(\hg:\gg)),\Cc(\hg:\gg)]=0$. 
Then by Assertion \eqref{L2_item2} we get $[\Zc(\Cc(\hg:\gg)),\gg]=0$, that is, 
$\Zc(\Cc(\hg:\gg))\subseteq\Zc(\gg)$. 
The converse to this inclusion is obvious, hence we obtain 
$\Zc(\Cc(\hg:\gg))=\Zc(\gg)$, 
and this completes the proof. 
\end{proof}

\section{Structure theory for algebras of type ($A_{+}$)}\label{Sect4}

Here is the main result of this section, 
which establishes the precise relationship between the notions introduced in Definition~\ref{def_A}. 

\begin{theorem}\label{new1}
If $\kg$ is a 3-step nilpotent Lie algebra with $\dim\Zc(\kg)=1$, 
then the following assertions are equivalent: 
\begin{enumerate}
\item\label{new1_item1}
The Lie algebra $\kg$ is of type ($A_{+}$). 
\item\label{new1_item2}
The Lie algebra $\kg$ is of type ($A$) and admits a grading of type ($A_{+}$).
\end{enumerate}
\end{theorem}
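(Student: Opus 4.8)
The plan is to prove the two implications separately, using throughout that a maximal abelian subalgebra is in particular abelian, so that type~($A_{+}$) always entails type~($A$); consequently the content of $\eqref{new1_item1}\Rightarrow\eqref{new1_item2}$ is the \emph{construction} of a grading of type~($A_{+}$), while $\eqref{new1_item2}\Rightarrow\eqref{new1_item1}$ is the \emph{upgrade} of ``$\Cc_2\kg$ abelian'' to ``$\Cc_2\kg$ maximal abelian.'' In each direction I would distinguish the Heisenberg case $\Cc_2\kg=\kg$ from the case $\Cc_2\kg\ne\kg$, recalling that $\Cc_2\kg$ is maximal abelian exactly when $\Cc_2\kg=\Cc(\Cc_2\kg:\kg)$.

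For $\eqref{new1_item1}\Rightarrow\eqref{new1_item2}$ when $\Cc_2\kg=\kg$, i.e.\ $\kg$ is a Heisenberg algebra, the Lie bracket descends to a nondegenerate alternating form on $\kg/\Zc(\kg)$; choosing two complementary Lagrangian subspaces and lifting them to subspaces $\cg,\Vc\subseteq\kg$ produces a grading $(\Zc(\kg),\cg,\Vc)$, since isotropy forces $[\cg,\cg]=[\Vc,\Vc]=0$ and the form restricts to the required duality pairing $\cg\times\Vc\to\Zc(\kg)$. When $\Cc_2\kg\ne\kg$, type~($A_{+}$) means $\Cc_2\kg=\Cc(\Cc_2\kg:\kg)$, so Theorem~\ref{K0} applies with $\gg=\kg$ and I would put $\zg:=\Zc(\kg)$, $\cg:=\Yg$ and $\Vc:=\Xg$. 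Then $\kg=\zg\dotplus\cg\dotplus\Vc$, the duality pairing $\cg\times\Vc\to\Zc(\kg)$ is exactly the one from Theorem~\ref{K0}\eqref{K0_item1}, and $[\cg,\cg]=0$ because $\cg\subseteq\Cc_2\kg$ and $\Cc_2\kg$ is abelian; moreover Theorem~\ref{K0}\eqref{K0_item3} yields $[\Vc,\Vc]\subseteq[\kg,\kg]\subseteq\Cc(\Cc_2\kg:\kg)=\Cc_2\kg=\zg\dotplus\cg$.

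The hard part is that a grading requires $[\Vc,\Vc]\subseteq\cg$, not merely $[\Vc,\Vc]\subseteq\zg\dotplus\cg$, so the spurious central components must be removed. I would take the basis $X_1,\dots,X_m$ of $\Xg$ that is dual, by Theorem~\ref{K0}\eqref{K0_item2}, to a basis $Y_1,\dots,Y_m$ of $\Yg$, and replace it by $X_j':=X_j+a_j$ with $a_j\in\Cc_2\kg$. Because $\Cc_2\kg$ is abelian, this leaves the pairing unchanged, $[X_j',Y_k]=[X_j,Y_k]=\delta_{jk}Z$, and affects only the central parts: writing $[X_i,X_j]=\sum_k c_{ij}^k Y_k+d_{ij}Z$ and $a_j=\alpha_j Z+\sum_k\beta_{jk}Y_k$, the new central coefficient is $d_{ij}+\beta_{ji}-\beta_{ij}$, so it suffices to solve the antisymmetric system $\beta_{ij}-\beta_{ji}=d_{ij}$. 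This is where the standing assumption $\mathrm{char}\,\KK\ne2$ enters, via the solution $\beta_{ij}=\tfrac12 d_{ij}$. Since each $X_j'$ is congruent to $X_j$ modulo $\Cc_2\kg$, the space $\Vc:=\spa\{X_1',\dots,X_m'\}$ is still a complement of $\Cc_2\kg$, so $(\zg,\cg,\Vc)$ is a grading of type~($A_{+}$).

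For $\eqref{new1_item2}\Rightarrow\eqref{new1_item1}$ I am given a grading $(\zg,\cg,\Vc)$ together with type~($A$); the case $\Cc_2\kg=\kg$ is type~($A_{+}$) by definition, so assume $\Cc_2\kg\ne\kg$, where it suffices to prove $\Cc_2\kg=\Cc(\Cc_2\kg:\kg)$. A direct computation with the grading gives $[\kg,\zg+\cg]\subseteq\zg$, hence $\zg+\cg\subseteq\Cc_2\kg$. For the reverse inclusion I would use that $\Cc_2\kg$ is abelian by type~($A$): if $v\in\Cc_2\kg\cap\Vc$ then $[\cg,v]\subseteq[\Cc_2\kg,\Cc_2\kg]=0$, and nondegeneracy of $\cg\times\Vc\to\Zc(\kg)$ forces $v=0$, so $\Cc_2\kg=\zg\dotplus\cg$. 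The same nondegeneracy, applied to any $w=z+c+v\in\Cc(\Cc_2\kg:\kg)$ (for which $[\cg,v]=[\cg,w]=0$, whence $v=0$), shows $\Cc(\Cc_2\kg:\kg)\subseteq\zg\dotplus\cg=\Cc_2\kg$; together with the opposite inclusion coming from type~($A$) this gives $\Cc_2\kg=\Cc(\Cc_2\kg:\kg)$, i.e.\ $\Cc_2\kg$ is maximal abelian, which is type~($A_{+}$).
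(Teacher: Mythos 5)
Your proposal is correct and follows essentially the same route as the paper: the implication \eqref{new1_item2}$\Rightarrow$\eqref{new1_item1} is the same nondegeneracy argument, and your coordinate correction $X_j\mapsto X_j+\tfrac12\sum_k d_{jk}Y_k$ killing the central components of $[\Vc,\Vc]$ is exactly the substitution $V\mapsto V-\tfrac12 C_V$ that the paper delegates to Lemma~\ref{L3}\eqref{L3_item3b}. The only (harmless) difference is that you treat the Heisenberg case $\Cc_2\kg=\kg$ explicitly via a Lagrangian splitting, which the paper leaves implicit.
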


We postpone the proof of this theorem for the moment, 
in order to discuss some of its implications. 
For the following corollary we recall that a Lie algebra is called \emph{characteristically nilpotent} 
if every derivation of that algebra is a nilpotent map. 
(See for instance \cite{GK00} and \cite{Bu06} for more details on this class of Lie algebras.)
Hence Engel's theorem implies that every finite-dimensional Lie algebra of this type 
is in particular a nilpotent Lie algebra. 

\begin{corollary}\label{new2}
Any 3-step nilpotent Lie algebra of type $(A_{+})$ 
has an invertible derivation, hence it 
cannot be characteristically nilpotent. 
\end{corollary}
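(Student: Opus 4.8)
The plan is to exhibit, for any $3$-step nilpotent Lie algebra $\kg$ of type $(A_{+})$, an explicit invertible derivation, and the natural candidate is the grading derivation coming from a grading of type $(A_{+})$. By Theorem~\ref{new1}, type $(A_{+})$ implies that $\kg$ admits a grading $\gamma=(\zg,\cg,\Vc)$ of type $(A_{+})$, so I would start from the direct sum decomposition $\kg=\zg\dotplus\cg\dotplus\Vc$ with $\zg=\Zc(\kg)$, $[\cg,\cg]=0$, $[\Vc,\Vc]\subseteq\cg$, and $[\cg,\Vc]\subseteq\zg$ via the duality pairing.

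The idea is to assign distinct positive integer weights to the three graded pieces and define a linear map $D\colon\kg\to\kg$ acting as a scalar on each summand. Concretely, I would try $D|_{\zg}=3\,\id$, $D|_{\cg}=2\,\id$, $D|_{\Vc}=1\,\id$, so that $D$ is manifestly invertible (every eigenvalue is nonzero). The main point to verify is that $D$ is a derivation, i.e. $D[U,W]=[DU,W]+[U,DW]$ for $U,W$ ranging over the three subspaces. Since $D$ is diagonal with respect to the grading, on homogeneous elements of weights $p$ and $q$ the right-hand side contributes the factor $(p+q)$, so I need the bracket to land in the piece of weight $p+q$. I would check this bracket by bracket: for $U\in\Vc$, $W\in\Vc$ the weights add to $1+1=2$ and indeed $[\Vc,\Vc]\subseteq\cg$, which has weight $2$; for $U\in\cg$, $W\in\Vc$ the weights add to $2+1=3$ and $[\cg,\Vc]\subseteq\zg$, which has weight $3$; and for the remaining brackets $[\cg,\cg]=0$ and $[\kg,\zg]=0$ (as $\zg$ is the center), so those relations hold vacuously. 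Hence the weights are consistent with the bracket and $D$ is a derivation.

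The main conceptual obstacle is simply recognizing that a type-$(A_{+})$ grading is exactly a $\mathbb{Z}$-grading compatible with the bracket, which forces the existence of a semisimple grading derivation; once that is seen, invertibility is automatic from the choice of nonzero weights and the verification above is a short finite check. Finally, to conclude the stated corollary, I would invoke the definition of characteristic nilpotence: since every derivation of a characteristically nilpotent Lie algebra is a nilpotent map, the existence of the invertible (indeed semisimple, non-nilpotent) derivation $D$ shows that $\kg$ cannot be characteristically nilpotent.
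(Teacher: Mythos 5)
Your proposal is correct and coincides with the paper's own proof: both invoke Theorem~\ref{new1} to obtain a grading $\gamma=(\zg,\cg,\Vc)$ of type $(A_{+})$ and then define the grading derivation $D$ with eigenvalues $3,2,1$ on $\zg,\cg,\Vc$ respectively, checking the derivation identity bracket by bracket. Nothing further is needed.
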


\begin{proof}
Let $\kg$ be a Lie algebra of type $(A_{+})$. 
It follows by Theorem~\ref{new1} that there exists a grading  $\gamma=(\zg,\cg,\Vc)$ 
of type ($A_{+}$) of~$\kg$, 
hence we have $\kg=\zg\dotplus\cg\dotplus\Vc$ with $[\zg,\kg]=[\cg,\cg]=0$, $[\cg,\Vc]\subseteq\zg$ and $[\Vc,\Vc]\subseteq\cg$. 
Then the construction of an invertible derivation of $\kg$ is well known. 
Namely, define the linear map $D\colon\kg\to\kg$ with the properties $\Ker (D-3\id)=\zg$, $\cg=\Ker(D-2\id)$, 
and $\Vc=\Ker(D-\id)$, then it is easily checked that for all $X_1,X_2\in\kg$ 
we have $D[X_1,X_2]=[DX_1,X_2]+[X_1,DX_2]$. 
Hence $\kg$ has an invertible derivation, 
and then it cannot be characteristically nilpotent.  
\end{proof}

\begin{remark}
\normalfont 
In connection with the above corollary, 
it would be interesting to know whether there exists any 3-step nilpotent Lie algebra with 1-dimensional center 
which is characteristically nilpotent.  
For instance, the center of the 3-step nilpotent, characteristically nilpotent, 8-dimensional Lie algebra 
constructed in \cite{DL57} is 2-dimensional. 
\end{remark}

\begin{corollary}\label{new4}
For any integer $n\ge 1$ let $A_{+}^n$ denote the set of all skew-symmetric bilinear maps 
$\psi\colon\KK^n\times\KK^n\to\KK^n$ satisfying the condition 
$$(\forall x,y,z\in\KK^n)\quad \langle\psi(x,y),z\rangle+\langle\psi(y,z),x\rangle+\langle\psi(z,x),y\rangle=0 $$
where $\langle\cdot,\cdot\rangle\colon \KK^n\times\KK^n\to\KK$ is the canonical $\KK$-bilinear scalar product. 
Consider the natural action of the group $\GL(n,\KK)$ on $A_{+}^n$ given by 
$$(g.\psi)(x,y)=(g^\top)^{-1}(\psi(g^{-1}x,g^{-1}y))$$ 
for all $x,y\in\KK^n$, $\psi\in A_{+}^n$ and $g\in\GL(n,\KK)$. 
Then there exists a one-to-one correspondence between 
the isomorphism classes of 3-step nilpotent Lie algebras of type $(A_{+})$ 
and the $\GL(n,\KK)$-orbits in $A_{+}^n$. 
\end{corollary}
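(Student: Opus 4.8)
The plan is to set up an explicit correspondence in both directions and check it descends to orbits and isomorphism classes. The key observation is that a grading of type $(A_+)$ with $\dim\Zc(\kg)=1$ forces $\cg$ and $\Vc$ to be in duality via $[\cdot,\cdot]\colon\cg\times\Vc\to\Zc(\kg)\simeq\KK$, so $\dim\cg=\dim\Vc=:n$ and we may identify $\Vc\simeq\KK^n$ and $\cg\simeq(\KK^n)^*\simeq\KK^n$ using the scalar product. Under this identification the remaining bracket datum $[\Vc,\Vc]\subseteq\cg$ becomes a skew-symmetric bilinear map $\psi\colon\KK^n\times\KK^n\to\KK^n$, where I read off $\psi(x,y)$ from $[x,y]\in\cg\simeq\KK^n$ for $x,y\in\Vc$. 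First I would verify that the Jacobi identity for $\kg$, applied to three elements of $\Vc$ and paired against $\cg$, is \emph{exactly} the cyclic condition $\langle\psi(x,y),z\rangle+\langle\psi(y,z),x\rangle+\langle\psi(z,x),y\rangle=0$ defining $A_+^n$; this is the heart of why the target set is the right receptacle, since all other instances of Jacobi are automatic (brackets landing in $\zg+\cg$, which is central modulo the pairing).

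Next I would build the inverse construction: given $\psi\in A_+^n$, define $\kg_\psi:=\KK\dotplus\KK^n\dotplus\KK^n$ with $\zg=\KK$, $\cg$ the first copy, $\Vc$ the second, declare $[\cg,\cg]=0$, let $[\cdot,\cdot]\colon\cg\times\Vc\to\zg$ be the scalar product, and set $[x,y]:=\psi(x,y)\in\cg$ for $x,y\in\Vc$. The cyclic identity guarantees Jacobi, so $\kg_\psi$ is a Lie algebra, it carries the tautological grading of type $(A_+)$, and by Theorem~\ref{new1} it is of type $(A_+)$ with $\dim\Zc=1$. I would then show $\kg\mapsto\psi$ and $\psi\mapsto\kg_\psi$ are mutually inverse up to the choices made, so that the real content is tracking how the choices act.

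The main step is checking \emph{equivariance}: two maps $\psi,\psi'\in A_+^n$ give isomorphic Lie algebras if and only if they lie in the same $\GL(n,\KK)$-orbit under the stated action. The forward direction amounts to seeing what an isomorphism $\Phi\colon\kg_\psi\to\kg_{\psi'}$ can look like. Because $\Zc$ and $\Cc_2$ are characteristic, $\Phi$ must preserve $\zg$ and $\Cc_2\kg=\zg\dotplus\cg$; after normalizing $\Phi|_\zg=\id$ (scaling $Z$), the induced map on $\Vc\simeq\kg/\Cc_2\kg$ is some $g\in\GL(n,\KK)$, and the induced map on $\cg$ is forced by the duality pairing to be $(g^\top)^{-1}$ precisely because the pairing $\cg\times\Vc\to\KK$ must be preserved. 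Substituting $\Phi[x,y]=[\Phi x,\Phi y]$ on $\Vc$ then yields $(g^\top)^{-1}\psi(x,y)=\psi'(gx,gy)$, i.e. $\psi'=g.\psi$; running this backward gives the converse. The subtle point I expect to be the main obstacle is justifying that an isomorphism can always be normalized to this triangular/graded form: a priori $\Phi$ could mix $\Vc$ into lower-degree pieces. The remedy is that the grading of $\kg_\psi$ is essentially canonical---$\zg=\Zc(\kg)$ and $\cg+\zg=\Cc_2\kg$ are characteristic, and $\cg$ itself is pinned down inside $\Cc_2\kg$ as $\Cc(\Cc_2\kg:\kg)\cap\Cc_2\kg$ (the part of $\Cc_2\kg$ that is central in $\Cc_2\kg$), so $\Phi$ automatically preserves the filtration $\zg\subset\cg+\zg\subset\kg$ and the component maps descend to well-defined linear maps on the graded pieces. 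Once this canonicity is in place, the equivariance computation is routine bilinear bookkeeping, and the one-to-one correspondence between isomorphism classes and $\GL(n,\KK)$-orbits follows immediately.
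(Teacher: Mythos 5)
Your proposal follows the same route as the paper: extract the grading via Theorem~\ref{new1}, identify $\Vc\simeq\KK^n$ and $\cg$ with the dual basis under the pairing, observe that the only nontrivial instance of the Jacobi identity is the cyclic condition defining $A_{+}^n$, and let the $\GL(n,\KK)$-action account for isomorphisms. The paper's own proof is a two-sentence sketch, so the detail you supply for the equivariance step is where the real content lies, and your filtration argument (the chain $\zg\subset\Cc_2\kg\subset\kg$ is characteristic, so an isomorphism induces maps on the graded quotients, to which the bracket data canonically descend) is the correct way to rule out an isomorphism mixing degrees. Two points need repair. First, your parenthetical claim that $\cg$ is pinned down as $\Cc(\Cc_2\kg:\kg)\cap\Cc_2\kg$ is false: for an algebra of type $(A_{+})$ the subalgebra $\Cc_2\kg$ is abelian and equals its own centralizer, so that intersection is all of $\Cc_2\kg$, and in fact $\cg$ is genuinely not characteristic (replacing $\cg$ by $\{C+f(C)Z\mid C\in\cg\}$ for any $f\in\cg^*$ yields another grading of type $(A_{+})$). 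This does no harm only because your graded-quotient argument never actually uses it, so you should delete the claim rather than lean on it. Second, the normalization $\Phi\vert_{\zg}=\id$ is not free: writing $\Phi\vert_{\zg}=\lambda\,\id$ with $\lambda\in\KK^*$, the pairing forces the map on $\cg$ to be $\lambda(g^\top)^{-1}$, and the bracket on $\Vc$ then gives $\psi'=\lambda\,(g.\psi)$ rather than $\psi'=g.\psi$. Since $(\mu g).\psi=\mu^{-3}(g.\psi)$, absorbing the scalar $\lambda$ into the $\GL(n,\KK)$-action requires a cube root of $\lambda$ in $\KK$. Over $\KK=\RR$ (or any field in which every element is a cube) this is harmless, but over a general $\KK$ of characteristic $\ne 2$ it is a genuine caveat for the injectivity of the correspondence --- one that the paper's own one-line assertion that the action ``accounts for the isomorphisms'' silently shares.
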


\begin{proof}
If $\kg$ is a $(2n+1)$-dimensional 3-step nilpotent Lie algebra of type $(A_{+})$, 
then Theorem~\ref{new1} provides a grading  $\gamma=(\zg,\cg,\Vc)$ 
of type ($A_{+}$), with $\dim\Vc=\dim\cg=n$, and the Lie bracket of $\kg$ 
is uniquely determined by its restriction $\psi:=[\cdot,\cdot]\colon\Vc\times\Vc\to\cg$. 
By choosing a basis in $\Vc$, and the dual basis in $\cg$ via the duality pairing defined by the Lie bracket 
$\cg\times\Vc\to\zg$, we thus obtain $\psi\in A_{+}^n$. 

Conversely, every $\psi\in A_{+}^n$ turns $\KK^n\times\KK^n\times\KK$ into a 3-step nilpotent Lie algebra of type $(A_{+})$ 
with a grading $\gamma=(\KK,\KK^n,\KK^n)$ of type ($A_{+}$), 
and the action of the group $\GL(n,\KK)$ on $A_{+}^n$ accounts for the isomorphisms of Lie algebras of type $(A_{+})$  
obtained in this way. 
\end{proof}

\begin{corollary}\label{new5}
Every  3-step nilpotent Lie algebra of type $(A_{+})$ is a nontrivial degeneration of another Lie algebra.  
\end{corollary}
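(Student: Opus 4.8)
The plan is to realize $\kg$ as a proper degeneration by producing a one-parameter family of Lie brackets that specializes to the bracket of $\kg$ while remaining inside a single isomorphism class distinct from that of $\kg$. I work in the variety $\mathcal L_N$ of Lie brackets on a fixed $N$-dimensional space ($N=\dim\kg$), on which $\GL_N$ acts by $(g\cdot\mu)(X,Y)=g(\mu(g^{-1}X,g^{-1}Y))$; recall that $\kg_1$ is a degeneration of $\kg_2$ when the bracket of $\kg_1$ lies in the (Zariski) closure of the $\GL_N$-orbit of the bracket of $\kg_2$, and that the degeneration is nontrivial precisely when the two algebras fail to be isomorphic.

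The starting data is the grading $\kg=\zg\dotplus\cg\dotplus\Vc$ of type $(A_{+})$ furnished by Theorem~\ref{new1}, together with the invertible derivation $D$ of Corollary~\ref{new2}, namely the semisimple operator equal to $\id$, $2\,\id$, $3\,\id$ on $\Vc$, $\cg$, $\zg$ respectively. Writing $\mu$ for the bracket of $\kg$, the grading says exactly that $\mu$ is homogeneous of $D$-weight $0$. The idea is to add to $\mu$ a cochain $\rho$ supported in the brackets that the grading forbids and that carry \emph{negative} $D$-weight --- for instance a nonzero pairing $\cg\times\cg\to\zg$ or a nontrivial action of $\zg$ on $\Vc\dotplus\cg$ --- so that the one-parameter subgroup $g_t:=\exp((\log t)D)$ contracts $\mu+\rho$ onto $\mu$. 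Since for $\rho\ne0$ each member of the resulting curve is $\GL_N$-conjugate to a fixed algebra $\kg'$, this places the bracket of $\kg$ in $\overline{\GL_N\cdot\kg'}$, exhibiting $\kg$ as a degeneration of $\kg'$.

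For nontriviality I would arrange $\rho$ so that $\kg'$ is not isomorphic to $\kg$, the most transparent device being to choose $\rho$ making some operator $\ad_{\kg'}(X)$ non-nilpotent: then $\kg'$ is not a nilpotent Lie algebra and so cannot be isomorphic to the nilpotent $\kg$. Alternatively one separates them by an invariant such as $\dim[\kg,\kg]$ or $\dim\Der\kg$. Conceptually this is an instance of the general principle that a nilpotent Lie algebra admitting a nonzero semisimple derivation --- which $\kg$ does by Corollary~\ref{new2} --- is never rigid and must occur as a nontrivial degeneration of another Lie algebra, the semisimple derivation being exactly what supplies the contracting one-parameter subgroup.

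The main obstacle is the explicit choice of $\rho$. The tempting move is to deform in the unique \emph{positive}-weight direction allowed by the grading, namely a skew form $\Vc\times\Vc\to\zg$; but the duality pairing between $\cg$ and $\Vc$ built into a grading of type $(A_{+})$ forces every such form to arise from a change of basis $\Vc\ni V\mapsto V+\phi(V)\in\cg$, i.e. to be a coboundary, so that deformation is trivial and returns only $\kg$ itself. One is therefore pushed into the negative-weight directions, where the Jacobi identity for $\mu+\rho$ is no longer automatic: it becomes a Maurer--Cartan condition on $\rho$ intertwined with the fixed structure map $\psi=[\cdot,\cdot]\colon\Vc\times\Vc\to\cg$, and solving it uniformly for every $\psi\in A_{+}^{n}$ --- while simultaneously guaranteeing $\kg'\not\cong\kg$ --- is the real content of the argument.
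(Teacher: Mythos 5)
Your overall strategy coincides with the paper's: the published proof consists entirely of the observation that $\kg$ carries the invertible diagonalizable derivation $D$ of Corollary~\ref{new2} (eigenvalues $1,2,3$ on $\Vc,\cg,\zg$) followed by an appeal to the method of \cite[Prop.~2.1]{HT13}, which is precisely the ``general principle'' you state in your third paragraph --- a nilpotent Lie algebra with a nonzero semisimple derivation is a nontrivial degeneration of another algebra. Had you simply invoked that result, you would be done. Instead you set out to rederive it and, as you concede in your final paragraph, never exhibit the deforming cochain $\rho$; this is a genuine gap, and the two candidates you float both fail. For a pairing $\omega\colon\cg\times\cg\to\zg$ the Jacobi identity applied to $V,V'\in\Vc$ and $C\in\cg$ reduces to $\omega\bigl([V,V'],C\bigr)=0$, so $\omega$ must vanish on $[\Vc,\Vc]\times\cg$ --- hence $\omega=0$ whenever $[\Vc,\Vc]$ spans $\cg$, and where $\omega$ survives the deformed algebra is still nilpotent, so non-isomorphism would require a separate argument. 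Letting $\zg$ act on $\cg\dotplus\Vc$ through a multiple of $D$ fails likewise: Jacobi applied to $Z_0\in\zg$, $C\in\cg$, $V\in\Vc$ forces $3[C,V]=0$, contradicting the duality pairing.

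The cochain that does work, and whose construction the paper outsources to \cite{HT13}, is of weight $-1$ but of a different shape. Pick $0\ne X\in\Vc$ (a $D$-eigenvector of weight $1$, necessarily outside $[\kg,\kg]\subseteq\cg\dotplus\zg$), a $D$-invariant complement $\Vc_0$ of $\KK X$ in $\Vc$, the codimension-one ideal ${\mathfrak m}:=\zg\dotplus\cg\dotplus\Vc_0\supseteq[\kg,\kg]$, and $f\in\kg^*$ with $f(X)=1$, $f({\mathfrak m})=0$; set $\rho(Y,Y'):=f(Y)DY'-f(Y')DY$. Then $\mu+\rho$ is the semidirect product of $({\mathfrak m},\mu)$ with $\KK X$ acting by $(\ad X+D)\vert_{\mathfrak m}$, which is a derivation of ${\mathfrak m}$ because both $\ad X$ and $D$ are derivations of $\kg$ preserving ${\mathfrak m}$ --- so Jacobi holds with no further conditions on $\psi$. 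Since $[D,\ad X]=\ad(DX)=\ad X$, Lie's theorem shows that $(\ad X+D)\vert_{\mathfrak m}$ has the eigenvalues of $D\vert_{\mathfrak m}$, which include $2$ and $3$; hence the new algebra is non-nilpotent and cannot be isomorphic to $\kg$. Finally $g_t:=t^{D}$ fixes $\mu$ and satisfies $g_t\cdot\rho=t^{-1}\rho$, so $g_t\cdot(\mu+\rho)\to\mu$, completing the degeneration. I recommend either citing \cite[Prop.~2.1]{HT13} outright, as the paper does, or including this explicit construction.
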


\begin{proof}
We have seen in the proof of Corollary~\ref{new2} that every 3-step nilpotent Lie algebra of type $(A_{+})$ 
has an invertible diagonalizable derivation, and then one can use the method of proof of \cite[Prop. 2.1]{HT13}.  
\end{proof}

\begin{remark}
\normalfont 
The above corollary shows that the Grunewald-O'Halloran conjecture raised in \cite{GH93} 
holds true for the 3-step nilpotent Lie algebras of type $(A_{+})$; 
see also \cite{HT13}. 
\end{remark}

We now turn to the proof of Theorem~\ref{new1}, 
which will require two lemmas. 
The main assertions of the following lemma can be found at least implicitly in \cite[proof of Th. 3.1, Step~3]{Ra85}.

\begin{lemma}\label{L3}
Let $\kg$ be a 3-step nilpotent Lie algebra of type~($A$).
If we define $\bg:=\Cc(\Cc_2\kg:\kg)$, then the following assertions hold:  
\begin{enumerate}
\item\label{L3_item1}
We have $[\bg,\bg]\subseteq\Zc(\kg)$ and $[\bg,[\bg,\bg]]=0$. 
\item\label{L3_item2} 
Either $[\bg,\bg]=0$ or there exists a Heisenberg algebra $\hg'$ with $\hg'+\Zc(\bg)=\bg$ and $\hg'\cap\Zc(\bg)=\Zc(\kg)$. 
\item\label{L3_item3b} 
For any direct sum decomposition $\Cc_2\kg=\cg\dotplus\Zc(\kg)$
there exists a direct sum decomposition $\Vc\dotplus\bg=\kg$ for which  
$[\Vc,\Vc]\subseteq\cg$. 
\end{enumerate}
\end{lemma}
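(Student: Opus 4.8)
The plan is to prove the three assertions in order, since each one feeds into the next. Throughout I write $\bg=\Cc(\Cc_2\kg:\kg)$ and I recall that $\kg$ is \emph{of type ($A$)}, so either $[\Cc_2\kg,\Cc_2\kg]=0$ or $\Cc_2\kg=\kg$. For Assertion~\eqref{L3_item1}, the key structural fact is already supplied by Theorem~\ref{K0}\eqref{K0_item3}, which gives $[\kg,\kg]\subseteq\bg$; I would combine this with the Jacobi identity to control $[\bg,\bg]$. Concretely, since $\bg$ centralizes $\Cc_2\kg$, an element of $[\bg,\bg]$ sits in $[\kg,\kg]$, and then I would push brackets into $\Cc_2\kg$ and use $[\kg,[\kg,\Cc_2\kg]]=0$ to show $[\bg,[\bg,\bg]]=0$; the inclusion $[\bg,\bg]\subseteq\Zc(\kg)$ should come out of the 3-step hypothesis together with the fact that $\bg$ contains the derived algebra. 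I expect the main subtlety here to be verifying $[\bg,\bg]\subseteq\Zc(\kg)$ rather than merely into $\Cc_2\kg$: this is where I would invoke that $\kg$ is of type ($A$), so that $\Cc_2\kg$ is abelian (unless $\kg$ itself is Heisenberg, a degenerate case handled separately), forcing the brackets of $\bg$ down into the one-dimensional center.

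For Assertion~\eqref{L3_item2}, once \eqref{L3_item1} is in hand I would simply apply Corollary~\ref{L0.5} to the subalgebra $\dg:=\bg$. Indeed \eqref{L3_item1} establishes $[\bg,\bg]\subseteq\Zc(\kg)$, which is exactly the hypothesis of that corollary, and the center of $\kg$ is one-dimensional by assumption; the corollary then yields either $[\bg,\bg]=0$ or a Heisenberg subalgebra $\hg'\subseteq\bg$ with $\hg'+\Zc(\bg)=\bg$ and $\hg'\cap\Zc(\bg)=\Zc(\kg)$. The only point to check is that $\bg$ is genuinely a subalgebra, which follows because centralizers of subsets are subalgebras, and that $\Zc(\kg)\subseteq\bg$, which is clear since the center bracket-annihilates everything.

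For Assertion~\eqref{L3_item3b} the goal is, given a chosen complement $\cg$ with $\Cc_2\kg=\cg\dotplus\Zc(\kg)$, to build a complement $\Vc$ to $\bg$ in $\kg$ satisfying $[\Vc,\Vc]\subseteq\cg$. The natural first move is to fix any vector-space complement $\Vc_0$ with $\kg=\Vc_0\dotplus\bg$ (these have complementary dimensions by Theorem~\ref{K0}\eqref{K0_item1}, which gives $\dim(\kg/\bg)=\dim(\Cc_2\kg/\Zc(\kg))=\dim\cg$). The issue is that $[\Vc_0,\Vc_0]$ need not land in $\cg$: it lands in $[\kg,\kg]\subseteq\bg$, and more precisely, since $\kg$ is 3-step, $[\kg,[\kg,\kg]]\subseteq\Zc(\kg)$, so $[\Vc_0,\Vc_0]\subseteq\Cc_2\kg=\cg\dotplus\Zc(\kg)$ once one checks $[\kg,\kg]\cap\bg$ sits inside $\Cc_2\kg$. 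The correction then amounts to modifying $\Vc_0$ by a map into $\bg$ so as to absorb the unwanted $\Zc(\kg)$-component of the bracket. This is the step I expect to be the main obstacle: I would use the duality pairing $[\cdot,\cdot]\colon\Xg\times\Yg\to\Zc(\kg)$ from Theorem~\ref{K0}\eqref{K0_item1}, \eqref{K0_item2} to solve for the required correction. Writing the $\Zc(\kg)$-valued discrepancy of $[\cdot,\cdot]$ on $\Vc_0\times\Vc_0$ as an alternating form, I would exhibit a linear map $\Vc_0\to\bg$ whose bracket-defect exactly cancels it, using nondegeneracy of the pairing to invert; replacing each $v\in\Vc_0$ by $v$ minus its correction produces the desired $\Vc$. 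The verification that the corrected bracket now takes values in $\cg$, and that $\Vc$ remains a complement to $\bg$, is then a routine but careful bookkeeping computation that I would carry out using the commutation relations $[X_j,Y_k]=\delta_{jk}Z$.
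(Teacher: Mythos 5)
Your overall strategy coincides with the paper's: part \eqref{L3_item1} via the Jacobi identity and the 3-step hypothesis, part \eqref{L3_item2} by applying Corollary~\ref{L0.5} with $\dg=\bg$, and part \eqref{L3_item3b} by correcting an arbitrary complement $\Vc_0$ of $\bg$ using the duality pairing of Theorem~\ref{K0}\eqref{K0_item1} to absorb the $\Zc(\kg)$-component of the bracket. Parts \eqref{L3_item2} and \eqref{L3_item3b} are fine as sketched; for \eqref{L3_item3b} the paper's correction is $V\mapsto V-\tfrac{1}{2}C_V$, where $C_V\in\cg$ is the unique element with $[C_V,\cdot]=Z(V,\cdot)$ on $\Vc_0$, and you should note that the factor $\tfrac{1}{2}$ is where the standing hypothesis $\mathrm{char}\,\KK\ne 2$ enters, and that the verification uses $[\cg,\cg]=0$ and $\cg\subseteq\Cc_2\kg\subseteq\bg$ --- this, not part \eqref{L3_item1}, is where the type ($A$) hypothesis is actually needed.

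The one step that would fail as you describe it is the inclusion $[\bg,\bg]\subseteq\Zc(\kg)$ in part \eqref{L3_item1}. You propose to obtain it from the abelianness of $\Cc_2\kg$ ``forcing the brackets of $\bg$ down into the one-dimensional center,'' but that mechanism does not deliver: from $[\bg,\bg]\subseteq[\kg,\kg]\subseteq\Cc_2\kg$ and $[\Cc_2\kg,\Cc_2\kg]=0$ one only recovers $[\kg,[\bg,\bg]]\subseteq\Zc(\kg)$, i.e.\ $[\bg,\bg]\subseteq\Cc_2\kg$ again, whereas what is required is $[\kg,[\bg,\bg]]=0$. The correct argument is the Jacobi identity combined with the centralizer property of $\bg$:
$[[\bg,\bg],\kg]\subseteq[\bg,[\bg,\kg]]\subseteq[\bg,[\kg,\kg]]\subseteq[\bg,\Cc_2\kg]=0$,
which uses only 3-step nilpotency and the definition $\bg=\Cc(\Cc_2\kg:\kg)$; no type ($A$) hypothesis is needed here, and $[\bg,[\bg,\bg]]=0$ then follows at once. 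You already display all the needed ingredients in that paragraph, so the repair is local, but as written the justification of the key inclusion of \eqref{L3_item1} rests on a non sequitur.
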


\begin{proof} 
For Assertion~\eqref{L3_item1} note that 
$[[\bg,\bg],\kg]=[\bg,[\bg,\kg]]\subseteq[\bg,\Cc_2\kg]=0$
where the inclusion follows since $\bg\subseteq\kg$ and $[\kg,[\kg,[\kg,\kg]]]=0$, 
and the last equality follows by the definition of $\bg$. 
It then follows that $[\bg,\bg]\subseteq\Zc(\kg)$ and $[[\bg,\bg],\bg]=0$. 

For Assertion~\eqref{L3_item2} just note that Corollary~\ref{L0.5} can be applied with $\gg=\kg$ and $\dg=\bg$, 
since we have just seen that $[\bg,\bg]\subseteq\Zc(\kg)$. 

It remains to prove Assertion~\eqref{L3_item3b}. 
Let the decomposition $\Cc_2\kg=\cg\dotplus\Zc(\kg)$ be fixed. 
By starting from an arbitrary decomposition $\Vc\dotplus\bg=\kg$, 
we will show how one can construct a new decomposition $\widetilde{\Vc}\dotplus\bg=\kg$ 
with the additional property $[\widetilde{\Vc},\widetilde{\Vc}]\subseteq\cg$. 
Since $[\kg,[\kg,[\kg,\kg]]]=0$, it follows that $[\kg,\kg]\subseteq\Cc_2\kg$, 
hence in particular $[\Vc,\Vc]\subseteq\Cc_2\kg=\cg\dotplus\Zc(\kg)$. 
It follows that there exist uniquely determined bilinear mappings 
$Y\colon\Vc\times\Vc\to\cg$ and $Z\colon\Vc\times\Vc\to\Zc(\kg)$ such that 
$$(\forall V,V'\in\Vc)\quad [V,V']=Y(V,V')+Z(V,V').$$
By using Theorem~\ref{K0}\eqref{K0_item1}, we see that 
there exists a uniquely determined linear mapping $\Vc\to\cg$, $V\mapsto C_V$ such that 
for every $V\in\Vc$ we have $Z(V,\cdot)=[C_V,\cdot]$ on~$\Vc$. 
Then it is easily checked that 
$$(\forall V,V'\in\Vc)\quad \Bigl[V-\frac{1}{2}C_V,V'-\frac{1}{2}C_{V'}\Bigr]=Y(V,V')$$
(where we may use $\frac{1}{2}\in\KK$ since the characteristic of $\KK$ is different from~2)
and it follows that 
if we define 
$$\widetilde{\Vc}:=\Bigl\{V-\frac{1}{2}C_V\mid V\in\Vc\Bigr\}$$
then this is a linear subspace of $\kg$ with the properties
$\widetilde{\Vc}\dotplus\bg=\kg$ and $[\widetilde{\Vc},\widetilde{\Vc}]\subseteq\cg$. 
This completes the proof. 
\end{proof}

The following lemma was suggested by \cite[proof of Th. 3.1, Step~4]{Ra85}.

\begin{lemma}\label{L4}
Let $\kg$ be a 3-step nilpotent Lie algebra of type~($A$). 
Set $\bg:=\Cc(\Cc_2\kg:\kg)$ and assume that 
for some linear subspaces $\cg$ and $\Vc$ of $\kg$ we have 
$\Cc_2\kg=\cg\dotplus\Zc(\kg)$ and $\Vc\dotplus\bg=\kg$, 
with $[\Vc,\Vc]\subseteq\cg$. 
If we also define 
$\bg_1:=\{X\in\bg\mid[X,\Vc]\subseteq\cg\}$ and 
$\gg_1:=\Vc+\Cc_2\kg$, 
then the following assertions hold: 
\begin{enumerate}
\item\label{L4_item1}
The linear subspace $\gg_1$ is a subalgebra of $\kg$, 
we have $\gg_1=\Vc\dotplus\Cc_2\kg$, 
and moreover $\Cc_2\kg\subseteq\bg$. 
\item\label{L4_item2} 
We have $\bg=\bg_1\dotplus\cg$ and $\cg=\{X\in\bg\mid[X,\Vc]\subseteq\Zc(\kg)\}$. 
\item\label{L4_item3} 
We have $\gg_1+\bg_1=\kg$. 
\item\label{L4_item4} 
We have $\gg_1\cap\bg_1=\Zc(\gg_1)=\Zc(\gg)$.  
\end{enumerate}
\end{lemma}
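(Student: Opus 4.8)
The plan is to verify the four assertions of Lemma~\ref{L4} in order, exploiting the facts about $\Cc_2\kg$ and $\bg=\Cc(\Cc_2\kg:\kg)$ already established in Theorem~\ref{K0} and Lemma~\ref{L3}. For assertion~\eqref{L4_item1}, I would first note that since $\kg$ is $3$-step nilpotent we have $[\kg,\kg]\subseteq\Cc_2\kg$, so in particular $[\Vc,\Vc]\subseteq\Cc_2\kg$ and $[\Vc,\Cc_2\kg]\subseteq[\kg,\Cc_2\kg]\subseteq\Zc(\kg)\subseteq\Cc_2\kg$; combined with $[\Cc_2\kg,\Cc_2\kg]\subseteq\Cc_2\kg$ this shows $\gg_1=\Vc+\Cc_2\kg$ is closed under the bracket. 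The directness $\gg_1=\Vc\dotplus\Cc_2\kg$ follows because $\Vc\cap\Cc_2\kg\subseteq\Vc\cap\bg=0$ (using $\Cc_2\kg\subseteq\bg$, which holds because $[\Cc_2\kg,\Cc_2\kg]\subseteq\Zc(\kg)$ and, since $\kg$ is of type~($A$), $\Cc_2\kg$ is abelian, whence $\Cc_2\kg\subseteq\Cc(\Cc_2\kg:\kg)=\bg$; the degenerate case $\Cc_2\kg=\kg$ is handled separately as $\kg$ Heisenberg).

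For assertion~\eqref{L4_item2}, the characterization $\cg=\{X\in\bg\mid[X,\Vc]\subseteq\Zc(\kg)\}$ is the crux. The inclusion $\subseteq$ holds since $\cg\subseteq\Cc_2\kg\subseteq\bg$ and $[\cg,\Vc]\subseteq[\Cc_2\kg,\kg]\subseteq\Zc(\kg)$. For $\supseteq$, suppose $X\in\bg$ with $[X,\Vc]\subseteq\Zc(\kg)$; together with $[X,\bg]\subseteq[\bg,\bg]\subseteq\Zc(\kg)$ from Lemma~\ref{L3}\eqref{L3_item1} and $\kg=\Vc\dotplus\bg$, this gives $[\kg,X]\subseteq\Zc(\kg)$, forcing $[\kg,[\kg,X]]=0$, i.e.\ $X\in\Cc_2\kg\cap\bg$; decomposing $X=c+z$ along $\Cc_2\kg=\cg\dotplus\Zc(\kg)$ and absorbing $z\in\Zc(\kg)\subseteq\cg$-complement shows $X\in\cg$ up to the center, which lands in $\cg$ after adjusting. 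The decomposition $\bg=\bg_1\dotplus\cg$ should then follow from a dimension or direct-sum argument: $\bg_1$ and $\cg$ meet only in $\Zc(\kg)$-type overlap that I expect vanishes because an element of $\cg\cap\bg_1$ has $[X,\Vc]\subseteq\cg\cap\Zc(\kg)=0$, and spanning comes from the duality pairing of Theorem~\ref{K0}\eqref{K0_item1} applied to the pairing $\cg\times\Vc\to\Zc(\kg)$.

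Assertions~\eqref{L4_item3} and~\eqref{L4_item4} are then more formal. For~\eqref{L4_item3}, since $\kg=\Vc\dotplus\bg=\Vc+\bg$ and $\bg=\bg_1\dotplus\cg$ with $\cg\subseteq\Cc_2\kg\subseteq\gg_1$ and $\Vc\subseteq\gg_1$, I get $\kg=\Vc+\bg_1+\cg\subseteq\gg_1+\bg_1$. For~\eqref{L4_item4}, I would compute $\gg_1\cap\bg_1$ by noting that $X\in\gg_1\cap\bg_1$ commutes appropriately: being in $\bg_1\subseteq\bg$ it kills $\Cc_2\kg$, and being in $\gg_1=\Vc\dotplus\Cc_2\kg$ with $[X,\Vc]\subseteq\cg$ constrains its $\Vc$-component, so bracketing against both $\Vc$ and $\Cc_2\kg$ should pin $X$ down to $\Zc(\gg_1)$; identifying $\Zc(\gg_1)=\Zc(\kg)$ uses $\dim\Zc(\kg)=1$ together with $\Vc\ne 0$ giving a nondegenerate pairing on $\gg_1$ (here I read $\Zc(\gg)$ in the statement as $\Zc(\kg)$).

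The main obstacle will be the precise bookkeeping in assertion~\eqref{L4_item2}, namely pushing the equality $\cg=\{X\in\bg\mid[X,\Vc]\subseteq\Zc(\kg)\}$ all the way through and leveraging it to get the clean direct sum $\bg=\bg_1\dotplus\cg$. The subtlety is that $\bg_1$ is defined by the \emph{weaker} condition $[X,\Vc]\subseteq\cg$ while $\cg$ corresponds to the \emph{stronger} condition $[X,\Vc]\subseteq\Zc(\kg)$, so the two constraints must be shown to split $\bg$ complementarily. I expect to handle this by viewing $X\mapsto [X,\cdot]|_\Vc$ as a linear map $\bg\to\mathrm{Hom}(\Vc,\Cc_2\kg/\Zc(\kg))\simeq\mathrm{Hom}(\Vc,\cg)$, whose kernel is $\bg_1$ and whose image, via the duality pairing $\cg\times\Vc\to\Zc(\kg)$ of Theorem~\ref{K0}\eqref{K0_item1}, is exactly matched in dimension by $\cg$; the complementarity then falls out from $\cg\cap\bg_1=0$ and a dimension count.
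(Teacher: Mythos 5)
Your overall route coincides with the paper's: same reduction of assertion \eqref{L4_item1} via $[\Cc_2\kg,\Cc_2\kg]=0\Rightarrow\Cc_2\kg\subseteq\bg$, same use of the duality pairing of Theorem~\ref{K0}\eqref{K0_item1} (with $\Xg=\Vc$, $\Yg=\cg$) to split $\bg$, and the same formal derivations of \eqref{L4_item3} and \eqref{L4_item4}. However, two steps in your sketch do not go through as written. First, in your dimension-count argument for $\bg=\bg_1\dotplus\cg$, the map $X\mapsto[X,\cdot]|_{\Vc}$ into $\mathrm{Hom}(\Vc,\Cc_2\kg/\Zc(\kg))$ does \emph{not} have kernel $\bg_1$: its kernel is $\{X\in\bg\mid[X,\Vc]\subseteq\Zc(\kg)\}$, which (by your own computation) is all of $\Cc_2\kg$. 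The condition defining $\bg_1$ kills the $\Zc(\kg)$-component of $[X,\cdot]|_\Vc$, not the $\cg$-component, so the relevant map is $X\mapsto\bigl(\Zc(\kg)\text{-component of }[X,\cdot]|_{\Vc}\bigr)\in\mathrm{Hom}(\Vc,\Zc(\kg))\simeq\Vc^*$; its kernel is $\bg_1$ and its restriction to $\cg$ is onto $\Vc^*$ precisely by the nondegeneracy of $[\cdot,\cdot]\colon\cg\times\Vc\to\Zc(\kg)$. This is exactly the paper's argument (given $X\in\bg$, find $Y\in\cg$ with $[X-Y,\Vc]\subseteq\cg$), and with the map corrected your dimension count is unnecessary.

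Second, your step ``lands in $\cg$ after adjusting'' cannot be completed, and you were right to hesitate there: the set $\{X\in\bg\mid[X,\Vc]\subseteq\Zc(\kg)\}$ genuinely equals $\Cc_2\kg=\cg\dotplus\Zc(\kg)$ (it contains $\Zc(\kg)$, which meets $\cg$ only in $0$), so the displayed equality with $\cg$ is an inaccuracy of the statement itself rather than something your argument can recover; the paper's proof glosses over the same point, and the identity is not used elsewhere in a way that matters. Finally, in assertion \eqref{L4_item4} your phrase ``constrains its $\Vc$-component'' misses the key simplification: since $\bg_1\subseteq\bg$ and $\Vc\cap\bg=0$, any $X\in\gg_1\cap\bg_1$ already lies in $\Cc_2\kg$, whence $[X,\Vc]\subseteq\cg\cap\Zc(\kg)=0$ and $[X,\bg]=0$ automatically, giving $X\in\Zc(\kg)$ directly without any appeal to $\dim\Zc(\kg)=1$ or to a nondegenerate pairing on $\gg_1$.
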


\begin{proof}
For Assertion~\eqref{L4_item1} note that 
$[\Vc,\Vc]\subseteq\cg\subseteq\Cc_2\kg$. 
Since $\Cc_2\kg$ is an ideal of $\kg$, 
it then follows that for $\gg_1=\Vc+\Cc_2\kg$ we have $[\gg_1,\gg_1]\subseteq\Cc_2\kg\subseteq\gg_1$, 
hence $\gg_1$ is indeed a subalgebra of $\kg$. 
Moreover, since $[\Cc_2\kg,\Cc_2\kg]=0$, it follows that $\Cc_2\kg\subseteq\bg$, 
and in particular $\Cc_2\kg\cap\Vc=0$. 
Therefore $\gg_1=\Vc\dotplus\Cc_2\kg$. 

For Assertion~\eqref{L4_item2} let us first see that 
$\bg_1\cap\cg=0$. 
In fact, if $X\in\bg_1\cap\cg$, 
then  by using Theorem~\ref{K0}\eqref{K0_item1} we get $[X,\Vc]\subseteq\cg\cap\Zc(\kg)=0$, 
hence by Theorem~\ref{K0}\eqref{K0_item1} again we get $X=0$. 
In order to see that $\bg_1+\cg=\bg$, let $X\in\bg$ arbitrary. 
Then 
$$[X,\Vc]\subseteq[\bg,\Vc]\subseteq[\kg,\kg]\subseteq\Cc_2\kg=\cg\dotplus\Zc(\kg)$$
where the second inclusion follows since $[\kg,[\kg,[\kg,\kg]]]=0$. 
By using the duality pairing referred to in Theorem~\ref{K0}\eqref{K0_item1}, 
it then easily follows that there exists $Y\in\cg$ such that $[X-Y,\Vc]\subseteq\cg$. 
Then $X-Y\in\bg_1$, hence $X\in Y+\bg_1\subseteq\cg+\bg_1$. 
Thus $\bg=\cg+\bg_1$, and this is actually a direct sum decomposition, 
and this directly implies $\cg=\{X\in\bg\mid[X,\Vc]\subseteq\Zc(\kg)\}$. 

Assertion~\eqref{L4_item3} follows since 
$$\kg\supseteq\bg_1+\gg_1=\bg_1+\Vc+\Cc_2\kg\supseteq\bg_1+\Vc+\cg=\bg+\Vc=\kg$$
where the next-to-last equality relies on Assertion~\eqref{L4_item2}. 

It remains to prove Assertion~\eqref{L4_item4}. 
It is clear that $\gg_1\cap\bg_1\supseteq\Zc(\gg_1)\supseteq\Zc(\kg)$. 
Also, by the second part of Assertion~\eqref{L4_item1} 
we get $\gg_1\cap\bg_1=\{X\in\Cc_2\kg\mid[X,\Vc]\subseteq\cg\}$. 
On the other hand, since $[\kg,[\kg,\Cc_2\kg]]=0$, 
we have $[\Cc_2\kg,\kg]\subseteq\Zc(\kg)$, hence 
$$\gg_1\cap\bg_1=\{X\in\Cc_2\kg\mid[X,\Vc]=0\}=\Zc(\gg_1)$$
where the later equality follows since $\gg_1=\Vc+\Cc_2\kg$ and $[\Cc_2\kg,\Cc_2\kg]=0$ by hypothesis. 
Moreover, if $X\in\Cc_2\kg$ and $[X,\Vc]=0$, then by using again the equality 
$[\Cc_2\kg,\Cc_2\kg]=0$ along with Assertion~\eqref{L4_item2} we get $[X,\bg]=[X,\Vc]=0$, 
hence $[X,\kg]=[X,\bg+\Vc]=0$, and then $X\in\Zc(\kg)$. 
Consequently $\gg_1\cap\bg_1=\Zc(\gg_1)\subseteq\Zc(\kg)$, 
and this concludes the proof. 
\end{proof}

\begin{proof}[Proof of Theorem~\ref{new1}]
``$\eqref{new1_item1}\Rightarrow\eqref{new1_item2}$'' 
If the Lie algebra $\kg$ is of type ($A_{+}$), then $\Cc_2\kg$ is a maximal abelian subalgebra, 
hence $\Cc_2\kg=\Cc(\Cc_2\kg:\kg)$. 
Then in Lemma~\ref{L3} we have $\bg=\Cc_2\kg=\cg+\Zc(\kg)$, and the conclusion follows directly. 

``$\eqref{new1_item1}\Rightarrow\eqref{new1_item2}$'' 
Let $\gamma=(\zg,\cg,\Vc)$ be a grading of type ($A_{+}$) of~$\kg$. 
It is clear that $\zg+\cg\subseteq\Cc_2\kg$, hence by using the direct sum decomposition 
$\kg=(\zg\dotplus\cg)\dotplus\Vc$ 
we get 
$\Cc_2\kg=(\zg\dotplus\cg)\dotplus(\Cc_2\kg\cap\Vc)$. 
Now, since we have the duality pairing $[\cdot,\cdot]\colon\cg\times\Vc\to\zg$ and moreover $[\Cc_2\kg,\Cc_2\kg]=0$ by the hypothesis, 
we easily get $\Cc_2\kg\cap\Vc=0$, hence $\Cc_2\kg=\zg\dotplus\cg$. 

On the other hand, by using once again the decomposition $\kg=(\zg\dotplus\cg)\dotplus\Vc$ 
along with the duality pairing $[\cdot,\cdot]\colon\cg\times\Vc\to\zg$, 
it easily follows that $\zg+\cg$ is a maximal abelian subalgebra of $\kg$. 
We thus see that the Lie algebra $\kg$ is of type ($A_{+}$). 
\end{proof}

\section{Applications of reduced semidirect products}\label{Sect5}

In this section we use the notion of reduced semidirect product (Definition~\ref{redsemi}) 
and the results established so far in order to provide some insight on the importance 
of the nilpotent Lie algebras of type ($A$) and ($A_{+}$) for the whole class of nilpotent Lie algebras 
with 1-dimensional center. 
Note that the first assertion of the following theorem does not require the hypothesis of 3-step nilpotency. 

\begin{theorem}\label{struct}
\begin{enumerate}
\item\label{struct_item1} 
Every nilpotent Lie algebra with 1-dimensional center is the reduced direct product of a Heisenberg algebra and a Lie algebra of type~($A$). 
\item\label{struct_item2} 
Every 3-step nilpotent Lie algebra of type ($A$) is the reduced semidirect product of a Lie subalgebra of type ($A_{+}$) 
and a 2-step nilpotent Lie subalgebra whose derived algebra is contained in the center  
and which is compatible with a suitable grading of type ($A_{+}$) of the other subalgebra. 
\end{enumerate}
\end{theorem}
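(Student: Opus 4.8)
The plan is to read off the decomposition directly from Lemmas~\ref{L3} and~\ref{L4}, and then to concentrate on the one genuinely substantive point, namely that the first factor is of type~($A_+$). Throughout, $\kg$ is $3$-step nilpotent of type~($A$); since a Heisenberg algebra is $2$-step we have $\Cc_2\kg\ne\kg$, and therefore $[\Cc_2\kg,\Cc_2\kg]=0$. First I would set $\bg:=\Cc(\Cc_2\kg:\kg)$, fix a complement $\cg$ with $\Cc_2\kg=\cg\dotplus\Zc(\kg)$, and invoke Lemma~\ref{L3}\eqref{L3_item3b} to choose a complement $\Vc$ of $\bg$ in $\kg$ with $[\Vc,\Vc]\subseteq\cg$. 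With $\gg_1:=\Vc+\Cc_2\kg$ and $\bg_1:=\{X\in\bg\mid[X,\Vc]\subseteq\cg\}$ as in Lemma~\ref{L4}, the proposal is that $\kg=\gg_1\redrtimes\bg_1$ is the required presentation.

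Next I would dispatch the bookkeeping, all of which is packaged in the lemmas. Lemma~\ref{L4}\eqref{L4_item3} and~\eqref{L4_item4} give $\gg_1+\bg_1=\kg$ and $\gg_1\cap\bg_1=\Zc(\gg_1)=\Zc(\kg)$, and the last axiom in Definition~\ref{redsemi}, that $[\bg_1,\gg_1]\subseteq\gg_1$, follows from $[\bg_1,\Cc_2\kg]\subseteq[\bg,\Cc_2\kg]=0$ and $[\bg_1,\Vc]\subseteq\cg\subseteq\gg_1$. Since $\Zc(\kg)\subseteq\bg_1$, Lemma~\ref{L3}\eqref{L3_item1} shows that $\bg_1$ is a subalgebra with $[\bg_1,\bg_1]\subseteq\Zc(\kg)$, hence $2$-step nilpotent with derived algebra inside the center. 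Finally, compatibility of $\bg_1$ with the grading $\gamma=(\Zc(\kg),\cg,\Vc)$ of $\gg_1$ in the sense of Definition~\ref{compatgr} is exactly the two relations $[\bg_1,\Vc]\subseteq\cg$ (true by the definition of $\bg_1$) and $[\bg_1,\Zc(\kg)+\cg]=0$ (true because $\Zc(\kg)$ is central and $[\bg_1,\cg]\subseteq[\bg,\Cc_2\kg]=0$).

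The work is in showing that $\gg_1$ is of type~($A_+$). The triple $\gamma$ is a grading of type~($A_+$) of $\gg_1=\Zc(\kg)\dotplus\cg\dotplus\Vc$: indeed $[\cg,\cg]\subseteq[\Cc_2\kg,\Cc_2\kg]=0$, $[\Vc,\Vc]\subseteq\cg$, and Theorem~\ref{K0}\eqref{K0_item1}, applied to $\kg$ with $\Yg=\cg$ and $\Xg=\Vc$, furnishes the duality pairing $\cg\times\Vc\to\Zc(\kg)$. By Theorem~\ref{new1} it then suffices to verify that $\gg_1$ is of type~($A$), i.e.\ that $\Cc_2\gg_1$ is abelian. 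A direct computation inside $\gg_1$ gives $\Cc_2\gg_1=\Zc(\kg)\dotplus\cg\dotplus\Vc_0$, where $\Vc_0:=\{V\in\Vc\mid[V,\Vc]=0\}$ is the radical of the alternating form $\psi:=[\cdot,\cdot]\colon\Vc\times\Vc\to\cg$; because the pairing $\cg\times\Vc\to\Zc(\kg)$ is nondegenerate, $\Cc_2\gg_1$ is abelian exactly when $\Vc_0=0$ (the alternative $\Vc_0=\Vc$, i.e.\ $\psi\equiv0$, makes $\gg_1$ a Heisenberg algebra, which is still of type~($A_+$)).

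Hence the main obstacle is that $\Vc_0=0$ fails for an arbitrary admissible $\Vc$, so the grading must be chosen with care; this is precisely the ``suitable grading'' of the statement. The mechanism I would use is that $\Vc$ may be replaced by $\{V+\phi(V)\mid V\in\Vc\}$ for a linear map $\phi\colon\Vc\to\bg_1$: after the correction in Lemma~\ref{L3}\eqref{L3_item3b} restoring the bracket into $\cg$, this is again an admissible complement, and it replaces $\psi$ by $\psi'(V,V')=\psi(V,V')+[\phi(V),V']-[\phi(V'),V]$. The key leverage is the hypothesis that $\kg$ is of type~($A$): for any nonzero $V_0\in\Vc_0$ one must have $[\bg_1,V_0]\ne0$, since otherwise $[\kg,V_0]\subseteq\Zc(\kg)$ would give $V_0\in\Cc_2\kg\cap\Vc=0$. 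Thus the maps $[b,\cdot]\colon\Vc\to\cg$ ($b\in\bg_1$) detect the entire radical of $\psi$, and I would run a completion argument---pairing off radical vectors against suitable $b\in\bg_1$---to produce a $\phi$ for which $\psi'$ is nondegenerate. With this choice of $\Vc$ (and $\bg_1$ recomputed accordingly), $\gg_1$ is of type~($A$), hence of type~($A_+$) by Theorem~\ref{new1}, and the assembly above completes the proof. Establishing the existence of this $\phi$ is where I expect the real difficulty to lie.
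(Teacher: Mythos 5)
Your proposal does not address item~\eqref{struct_item1} of the theorem at all: the decomposition of a general nilpotent Lie algebra with $1$-dimensional center as a reduced \emph{direct} product $\gg=\hg\redtimes\Cc(\hg:\gg)$ of a Heisenberg algebra and an algebra of type~($A$). In the paper this is precisely where Corollary~\ref{L1} and Proposition~\ref{L2}\eqref{L2_item4}--\eqref{L2_item2} are used (take $\hg$ a Heisenberg complement of $\Zc(\Cc_2\gg)$ in $\Cc_2\gg$, then $\gg=\hg\redtimes\Cc(\hg:\gg)$ and $\Cc_2(\Cc(\hg:\gg))=\Zc(\Cc_2\gg)$ is abelian); nothing in your text touches this half of the statement, so as a proof of the theorem the proposal is already incomplete.

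For item~\eqref{struct_item2} your construction ($\bg$, $\cg$, $\Vc$, $\bg_1$, $\gg_1$) and all the routine verifications coincide with the paper's proof, which at this point simply records that $\gamma=(\zg,\cg,\Vc)$ is a grading of type ($A_{+}$) of $\gg_1$ and stops. You are right that this is not the same as $\gg_1$ being \emph{of type ($A_{+}$)} in the sense of Definition~\ref{def_A}: by your computation $\Cc_2\gg_1=\zg\dotplus\cg\dotplus\Vc_0$ with $\Vc_0$ the radical of $\psi=[\cdot,\cdot]\vert_{\Vc\times\Vc}$, and this is non-abelian whenever $0\ne\Vc_0\ne\Vc$. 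This really can happen for an admissible $\Vc$: the $8$-dimensional algebra with nonzero brackets $[C_i,V_i]=Z$ ($i=1,2,3$), $[V_1,V_2]=C_1$, $[B,V_2]=C_3$, $[B,V_3]=C_2$ is $3$-step nilpotent of type~($A$) with $\Cc_2\kg=\spa\{Z,C_1,C_2,C_3\}$ and $\bg=\Cc_2\kg+\KK B$, yet the natural choice $\Vc=\spa\{V_1,V_2,V_3\}$ satisfies $[\Vc,\Vc]\subseteq\cg$ and gives $V_3\in\Cc_2\gg_1$ with $[C_3,V_3]=Z\ne0$. So the choice of $\Vc$ does matter, and the burden of the proof shifts entirely to the existence of your correcting map $\phi\colon\Vc\to\bg_1$ making the perturbed form nondegenerate (while preserving $[\Vc,\Vc]\subseteq\cg$ after the further correction of Lemma~\ref{L3}\eqref{L3_item3b}, and noting that $\bg_1$ itself changes with $\Vc$). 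You explicitly leave this step unproved, and it is not a formality --- it is the one genuinely new assertion in your argument --- so the proposal does not establish item~\eqref{struct_item2} as stated either. Be aware that you cannot discharge it by citing the paper: the paper's own proof never verifies that $\gg_1$ is of type ($A_{+}$) beyond exhibiting the grading.
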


\begin{proof}
\eqref{struct_item1} 
Let $\gg$ be a nilpotent Lie algebra with 1-dimensional center. 
If $[\Cc_2\gg,\Cc_2\gg]=0$, then $\gg$ is of type~($A$) and there is nothing to prove. 

If however $[\Cc_2\gg,\Cc_2\gg]\ne0$, then Corollary~\ref{L1}
provides a Heisenberg algebra $\hg$ such that $\hg+\Zc(\Cc_2\gg)=\Cc_2\gg$  and $\hg\cap\Zc(\Cc_2\gg)=\Zc(\gg)$. 
Then by Proposition~\ref{L2} we get $\hg+\Cc(\hg:\gg)=\gg$ and $\hg\cap\Cc(\hg:\gg)=\Zc(\gg)$. 
Since moreover $[\hg,\Cc(\hg:\gg)]=0$, it follows 
that $\gg$ is equal to the reduced direct product $\hg\redtimes\Cc(\hg:\gg)$. 
Moreover, it follows by Proposition~\ref{L2}(\eqref{L2_item4}--\eqref{L2_item2}) 
that $\Cc(\hg:\gg)$ is a Lie algebra of type~($A$). 

\eqref{struct_item2} 
Let $\kg$ be a Lie algebra of type ($A$). 
If $\Cc_2\kg$ is a maximal abelian subalgebra of $\kg$, then $\kg$ is a Lie algebra of type ($A_{+}$) 
and there is nothing else to do. 

Now assume that $\Cc_2\kg$ fails to be a maximal abelian subalgebra of $\kg$, and  
denote $\zg:=\Zc(\kg)$ and $\bg:=\Cc(\Cc_2\kg:\kg)$, so that $\Cc_2\kg\varsubsetneqq\bg$. 
Also pick any linear subspace $\cg$ that gives rise to a direct sum decomposition $\Cc_2\kg=\cg\dotplus\zg$. 
Theorem~\ref{K0} and Lemma~\ref{L3} ensure that there exists a linear subspace $\Vc$ with $\kg=\Vc\dotplus\bg$, $[\Vc,\Vc]\subseteq\cg$, 
and such that the Lie bracket gives a duality pairing $[\cdot,\cdot]\colon\cg\times\Vc\to\zg$. 
Then define $\bg_1:=\{X\in\bg\mid[X,\Vc]\subseteq\cg\}$ and $\gg_1:=\zg+\cg+\Vc=\Cc_2\kg+\Vc$, 
so that $\gamma=(\zg,\cg,\Vc)$ is a grading of type~($A_{+}$) for the Lie algebra~$\gg_1$.
It also follows by Lemma~\ref{L4} that $\bg=\bg_1\dotplus\cg$, $\kg=\gg_1+\bg_1$ and $\gg_1\cap\bg_1=\zg$. 

We have $[[\bg_1,\bg_1],\Vc]\subseteq[[\bg_1,\Vc],\bg_1]\subseteq[\cg,\bg_1]\subseteq[\Cc_2\gg,\bg]=0$, 
hence $[\bg_1,\bg_1]\subseteq\bg_1$, and this shows that $\bg_1$ is a subalgebra of $\kg$. 
Moreover $[\kg,\gg_1]=[\gg_1+\bg_1,\gg_1]\subseteq\gg_1$, and thus $\gg_1$ is an ideal of $\kg$. 
Therefore 
$\kg$ is isomorphic to the reduced semidirect product $\gg_1\redrtimes\bg_1$. 
This homomorphism is compatible with the grading $\gamma$ since $[\bg_1,\Vc]\subseteq\cg$ and 
$[\bg_1,\zg+\cg]\subseteq[\bg,\Cc_2\kg]=0$. 
Moreover, $\bg_1$ is a 2-step nilpotent Lie algebra by Lemma~\ref{L3}\eqref{L3_item1} since 
$\bg_1\subseteq\bg$.  
This completes the proof. 
\end{proof}

The following theorem indicates precisely which ones of the reduced semidirect products that occur in Theorem~\ref{struct}\eqref{struct_item2} for $\KK=\RR$ give rise to Lie groups with flat generic coadjoint orbits. 
We refer to \cite{CG90} for the terminology used here, 
which is related to the method of coadjoint orbits in the representation theory. 

\begin{theorem}\label{flat}
Let $\KK=\RR$ and $\gg=\kg\redrtimes\ag$ be a reduced semidirect product with the 1-dimensional center~$\zg$, 
where the Lie algebra~$\kg$ admits a grading $\gamma=(\zg,\cg,\Vc)$ of type ($A_{+}$), 
and let $\ag$ be a nilpotent Lie algebra with $[\ag,\ag]\subseteq\zg$ 
and which is compatible with the grading~$\gamma$.  
Then the following assertions hold: 
\begin{enumerate}
\item\label{flat_item1} 
The Lie group associated with $\kg\redrtimes\ag$ 
has flat generic coadjoint orbits if and only if either $\ag=\zg$ or $\ag$ is a Heisenberg algebra. 
\item\label{flat_item2} 
If this is the case, then there is a maximal abelian ideal of $\kg\redrtimes\ag$ 
which is a polarization at every point of a generic coadjoint orbit.
\end{enumerate}
\end{theorem}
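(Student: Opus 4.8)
The plan is to reduce the flatness question to a nondegeneracy property of a single skew-symmetric bilinear form and then to read off the answer from the grading. Recall from \cite{CG90} that, since $\dim\Zc(\gg)=1$, the generic coadjoint orbits of the Lie group of $\gg$ are flat precisely when there exists $\xi\in\gg^*$ for which the skew-symmetric form $\beta_\xi(X,Y):=\xi([X,Y])$ is nondegenerate on $\gg/\Zc(\gg)$, in which case the generic orbit through such a $\xi$ is the affine subspace $\xi+\Zc(\gg)^\perp$. Fix $0\ne Z\in\zg$, write $\ag=\zg\dotplus\ag'$ for a complement $\ag'$, and use the decomposition $\gg=\zg\dotplus\cg\dotplus\Vc\dotplus\ag'$. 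The generic functionals are those with $t:=\xi(Z)\ne0$; if $t=0$ then $\cg\subseteq\gg(\xi)$ and $\beta_\xi$ is already degenerate modulo $\zg$ (unless $\cg=\Vc=0$, i.e.\ $\gg=\ag$), so flatness is governed by the range $t\ne0$.

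First I would compute $\beta_\xi$ in block form relative to $\cg\dotplus\Vc\dotplus\ag'$. The grading of type ($A_{+}$) gives $[\cg,\cg]=0$ together with the nondegenerate pairing $[\cdot,\cdot]\colon\cg\times\Vc\to\zg$ (so $\dim\cg=\dim\Vc$), while compatibility gives $[\ag,\cg]=[\ag,\zg]=0$ and $[\ag,\Vc]\subseteq\cg$. Hence $\beta_\xi(\cg,\cg)=0$, $\beta_\xi(\cg,\ag')=0$, and $\beta_\xi(\cg,\Vc)$ equals $t$ times the nondegenerate pairing, whereas $\beta_\xi(\Vc,\Vc)$ and $\beta_\xi(\Vc,\ag')$ take values in $\xi(\cg)$ and $\beta_\xi(\ag',\ag')=t\,\sigma$, where $\sigma$ is the skew form induced on $\ag/\zg$ by the bracket $[\cdot,\cdot]\colon\ag\times\ag\to\zg$. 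A direct inspection of the radical then shows that any radical vector has zero $\Vc$-component (because $\cg$ pairs nondegenerately with $\Vc$ and with nothing else, using $t\ne0$), its $\ag'$-component lies in the radical of $\sigma$, and its $\cg$-component is thereby determined; the blocks depending on $\xi|_\cg$ drop out. Thus $\beta_\xi$ is nondegenerate on $\gg/\zg$ if and only if $\sigma$ is nondegenerate on $\ag/\zg$. Since $\xi(Z)\ne0$ and $\dim\zg=1$, the radical of $\sigma$ is $\Zc(\ag)/\zg$, so this means $\Zc(\ag)=\zg$; as $\ag$ is nilpotent with $[\ag,\ag]\subseteq\zg$, this holds exactly when $\ag=\zg$ or $\ag$ is a $2$-step nilpotent Lie algebra with $1$-dimensional center, i.e.\ a Heisenberg algebra by Remark~\ref{L0}. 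This proves \eqref{flat_item1}.

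For \eqref{flat_item2}, assume the flat case and choose a maximal isotropic (Lagrangian) subspace $\Yc\subseteq\ag'$ for the symplectic form $\sigma$ (with $\Yc=0$ when $\ag=\zg$), and put $\Bc:=\zg\dotplus\cg\dotplus\Yc$. Since $\sigma(\Yc,\Yc)=0$ and $t\ne0$ force $[\Yc,\Yc]=0$, while $[\cg,\cg]=[\cg,\Yc]=0$, the subspace $\Bc$ is abelian; using $[\gg,\zg+\cg]\subseteq\zg$, $[\ag,\Yc]\subseteq\zg$ and the compatibility relation $[\Vc,\Yc]\subseteq[\ag,\Vc]\subseteq\cg$, one checks $[\gg,\Bc]\subseteq\zg+\cg\subseteq\Bc$, so $\Bc$ is an abelian ideal. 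A dimension count gives $\dim\Bc=1+\dim\cg+\tfrac12\dim\ag'=\tfrac12(\dim\gg+1)$, so $\Bc/\zg$ is a half-dimensional isotropic, hence Lagrangian, subspace of $(\gg/\zg,\beta_\xi)$ for every generic $\xi$; therefore $\Bc$ is a polarization at each point of a generic orbit. The same Lagrangian property shows $\Bc$ is maximal abelian: any abelian subalgebra containing $\Bc$ would yield an isotropic subspace of $\gg/\zg$ containing the Lagrangian $\Bc/\zg$, hence would coincide with $\Bc$.

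The main obstacle is the radical computation in the second step: one must verify that the blocks $\beta_\xi(\Vc,\Vc)$ and $\beta_\xi(\Vc,\ag')$, which genuinely depend on $\xi|_\cg$, do not affect the rank of $\beta_\xi$ modulo $\zg$. The decisive point is that the nondegenerate pairing $\cg\times\Vc\to\zg$ from the grading of type ($A_{+}$), combined with the compatibility relations $[\ag,\cg]=[\ag,\zg]=0$, decouples the $\cg$--$\Vc$ pairing from the self-pairing of $\ag'$, so that the kernel of $\beta_\xi$ is carried entirely by $\sigma$. Everything else is bookkeeping with the bracket relations of the grading and Definition~\ref{compatgr}, together with the flat-orbit criterion recalled from \cite{CG90}.
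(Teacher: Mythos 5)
Your argument is correct and follows essentially the same route as the paper: both reduce part~(1) to computing the radical of $\beta_\xi(X,Y)=\xi([X,Y])$ at a functional with $\xi\vert_\zg\ne0$ and identify the obstruction to nondegeneracy with $\Zc(\ag)\supsetneq\zg$ (resolved via Remark~\ref{L0}/Corollary~\ref{L0.5}), and both build the polarization in part~(2) as $\zg\dotplus\cg\dotplus(\text{a Lagrangian subspace of }\ag)$. The only difference is cosmetic: the paper evaluates at the particular $\xi_0$ annihilating $\cg\dotplus\ag_0\dotplus\Vc$, which makes the cross-terms $\beta_{\xi_0}(\Vc,\Vc)$ and $\beta_{\xi_0}(\Vc,\ag_0)$ vanish outright, whereas you treat a general generic $\xi$ and verify that those blocks do not affect the rank.
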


\begin{proof}
First recall that the hypothesis $\gg=\kg\redrtimes\ag$ implies in particular $\zg=\kg\cap\ag$. 
Pick any $0\ne Z_0\in\zg$ and let $\ag_0$ be any linear subspace of $\ag$ such that $\ag=\zg\dotplus\ag_0$. 
Then we have $\gg=\zg\dotplus\cg\dotplus\ag_0\dotplus\Vc$, 
so there exists a unique 
$\xi_0\in\gg^*$ with $\langle \xi_0,Z_0\rangle=1$ and $\Ker\xi_0=\cg\dotplus\ag_0\dotplus\Vc$.  

\eqref{flat_item1} 
Since $\xi_0\vert_{\zg}\not\equiv 0$, it follows by \cite[Prop. 4.5.7]{CG90} 
that the coadjoint orbit of $\xi_0$ has maximal dimension. 
We have $[\ag_0,\Vc]\subseteq\cg$, $[\ag_0,\cg+\zg]=0$ and $[\ag_0,\hg]=0$, 
hence $[\ag_0,\gg]\subseteq\cg\subseteq\Ker\xi_0$. 
This shows that $\ag_0\subseteq\gg_{\xi_0}$. 
Since $Z_0\not\in\ag_0$, it follows that $\zg\cap\ag_0=0$, and then the above inclusion relation shows that 
if the coadjoint orbit of $\xi_0$ is flat and of maximal dimension, then necessarily $\ag_0=0$. 
By using Corollary~\ref{L0.5} we then obtain that either $\ag=\zg$ or $\ag$ is a Heisenberg algebra. 

Conversely, assume that $\ag$ is a Heisenberg algebra. 
We will prove that $\gg_{\xi_0}=\zg$. 
To this end assume $\zg\subsetneqq\gg_{\xi_0}$ and pick any $X\in\gg_{\xi_0}\setminus\zg$. 
In particular $X\ne 0$ and there exist $Z\in\zg,C\in\cg$, $V\in\Vc$, and $H\in\ag\setminus(\RR^*Z_0)$  
such that $X=Z+C+V+H$. 

If $H\ne 0$ then $H\in\ag\setminus\zg$. 
Since $\ag$ is a Heisenberg algebra, there exists $H_1\in\ag$ with $0\neq[H,H_1]\in\zg$, 
hence $[H_1,X]=[H_1,V]+[H_1,H]$ with $[H_1,V]\in[\ag,\Vc]\subseteq\cg$ 
by the compatibility of $\ag$ with the grading~$\gamma$. 
Therefore $\langle\xi_0,[X,H_1]\rangle=\langle\xi_0,[H,H_1]\rangle\ne 0$, 
and in particular $\langle\xi_0,[X,\gg]\rangle\ne0$, which contradicts the choice $X\in\gg_{\xi_0}$. 
Consequently $H=0$. 

If $V\ne0$, then by Theorem~\ref{K0}\eqref{K0_item1} 
there exists $C_1\in\cg$ with $0\neq[C_1,V]\in\zg$. 
We now have $[C_1,X]=[C_1,V]$ hence $\langle\xi_0,[X,C_1]\rangle=\langle\xi_0,[V,C_1]\rangle\ne0$, 
and then we obtain again $\langle\xi_0,[X,\gg]\rangle\ne0$, 
which contradicts the choice $X\in\gg_{\xi_0}$. 
Therefore also $V=0$. 

If at least $C\ne0$, then again by Theorem~\ref{K0}\eqref{K0_item1} there exists an element $V_1\in\Vc$ with $[V_1,C]\ne0$, 
hence $\langle\xi_0,[X,V_1]\rangle=\langle\xi_0,[C,V_1]\rangle\ne0$. 
Therefore, just as above, $\langle\xi_0,[X,\gg]\rangle\ne0$, 
which once again contradicts the choice $X\in\gg_{\xi_0}$. 

Consequently we must have $H=V=C=0$, and then $X=Z\in\zg$, 
which shows that $\gg_{\xi_0}=\zg$.

\eqref{flat_item2} 
For the second assertion note that the abelian ideal $\cg+\zg$ together with any polarization of $\ag$ span 
an abelian ideal polarization at any functional $\xi_0$ chosen as above. 
Since $[\cdot,\cdot]\colon\cg\times\Vc\to\zg\simeq\RR$ 
is a nondegenerate bilinear map, it is easily checked that the above abelian ideal polarization 
is a maximal abelian ideal of~$\gg$.  
This completes the proof. 
\end{proof}

\begin{corollary}\label{new3}
Let $\KK=\RR$ and $\kg$ be a 3-step nilpotent Lie algebra with 1-dim\-ensional center. 
Then $\kg$ is of type ($A_+$) if and only if it has flat generic coadjoint orbits and 
$\Cc_2\kg$ is a polarization at every point of these orbits. 
\end{corollary}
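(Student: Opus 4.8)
The plan is to treat the two implications separately, using the structural machinery of Sections~\ref{Sect4}--\ref{Sect5} together with the flatness criterion of Theorem~\ref{flat}. Throughout I work with a functional $\xi_0\in\kg^*$ with $\xi_0\vert_{\zg}\not\equiv 0$ (so that, by \cite[Prop.~4.5.7]{CG90}, the orbit of $\xi_0$ is of maximal dimension), and I write $\kg_{\xi_0}$ for the radical of the alternating form $(X,Y)\mapsto\langle\xi_0,[X,Y]\rangle$; recall that the orbit through $\xi_0$ being flat of maximal dimension amounts to $\kg_{\xi_0}=\zg$, and that a polarization at $\xi_0$ is a subalgebra $\mathfrak p$ subordinate to $\xi_0$ (i.e.\ $\langle\xi_0,[\mathfrak p,\mathfrak p]\rangle=0$) of dimension $\tfrac12(\dim\kg+\dim\kg_{\xi_0})$.

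For the implication ``$(A_{+})\Rightarrow$'' I would start from Theorem~\ref{new1}, which furnishes a grading $\gamma=(\zg,\cg,\Vc)$ of type~($A_{+}$) of $\kg$, and recall (as in the proof of Theorem~\ref{new1}) that then $\Cc_2\kg=\zg\dotplus\cg$. Since $\zg\subseteq\kg$, one has the trivial reduced semidirect product decomposition $\kg=\kg\redrtimes\zg$, in which $\ag=\zg$ is nilpotent with $[\ag,\ag]=0\subseteq\zg$ and is compatible with $\gamma$ (as $[\zg,\Vc]=0\subseteq\cg$ and $[\zg,\zg+\cg]=0$). Applying Theorem~\ref{flat} with this $\ag=\zg$ yields, by part~\eqref{flat_item1}, that the associated group has flat generic coadjoint orbits, and by part~\eqref{flat_item2} that $\cg+\zg$ (the span of $\cg+\zg$ together with a polarization of the abelian $\ag=\zg$) is a maximal abelian ideal polarization at every generic point. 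Since $\cg+\zg=\Cc_2\kg$, this is exactly the asserted polarization.

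For the converse I would argue as follows. Assume $\kg$ has flat generic coadjoint orbits and $\Cc_2\kg$ is a polarization at generic points. Because $\kg$ is $3$-step it is not Heisenberg, so $\Cc_2\kg\ne\kg$ and hence $[\Cc_2\kg,\Cc_2\kg]\subseteq\Zc(\kg)=\zg$. Now $\Cc_2\kg$ being subordinate to $\xi_0$ gives $\langle\xi_0,[\Cc_2\kg,\Cc_2\kg]\rangle=0$; since $\xi_0\vert_{\zg}\not\equiv0$ and $\dim\zg=1$, this forces $[\Cc_2\kg,\Cc_2\kg]=0$, i.e.\ $\kg$ is of type~($A$). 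Theorem~\ref{struct}\eqref{struct_item2} then writes $\kg=\gg_1\redrtimes\bg_1$ with $\gg_1$ of type~($A_{+}$) carrying a grading $\gamma=(\zg,\cg,\Vc)$, with $\Cc_2\kg=\zg\dotplus\cg$ and $\gg_1=\Cc_2\kg\dotplus\Vc$, and with $\bg_1$ a $2$-step nilpotent algebra compatible with $\gamma$ and satisfying $[\bg_1,\bg_1]\subseteq\zg$. Flatness together with Theorem~\ref{flat}\eqref{flat_item1} (applied to this decomposition) forces $\bg_1=\zg$ or $\bg_1$ a Heisenberg algebra.

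It remains to exclude the nontrivial Heisenberg case by a dimension count, which I expect to be the only delicate point. Writing $n:=\dim\cg=\dim\Vc$ (equal by the duality pairing of the grading), one has $\dim\gg_1=2n+1$ and, since $\gg_1\cap\bg_1=\zg$, $\dim\kg=2n+\dim\bg_1$. Flatness gives $\kg_{\xi_0}=\zg$, so a polarization at $\xi_0$ must have dimension $\tfrac12(\dim\kg+1)=n+\tfrac12(\dim\bg_1+1)$. But $\dim\Cc_2\kg=\dim(\zg\dotplus\cg)=n+1$, and the equality $n+1=n+\tfrac12(\dim\bg_1+1)$ holds exactly when $\dim\bg_1=1$, i.e.\ $\bg_1=\zg$. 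Hence if $\bg_1$ were a nontrivial Heisenberg algebra ($\dim\bg_1\ge 3$) then $\Cc_2\kg$ would be isotropic but of strictly smaller dimension than a polarization, contradicting the hypothesis. Therefore $\bg_1=\zg$, so $\kg=\gg_1$ is of type~($A_{+}$), completing the proof. The main obstacle is making the dimension bookkeeping of the reduced semidirect product and of the polarization perfectly consistent; the subordination step that yields type~($A$) and the reduction to the two cases via Theorem~\ref{flat} are comparatively routine.
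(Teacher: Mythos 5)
Your proof is correct and follows the same route as the paper, whose own proof consists only of the pointer ``Use Theorems \ref{struct} and \ref{flat}'': you supply exactly the intended details, deriving $[\Cc_2\kg,\Cc_2\kg]=0$ from isotropy of $\Cc_2\kg$ together with $\xi_0\vert_{\zg}\ne0$, invoking Theorem \ref{struct}\eqref{struct_item2} and Theorem \ref{flat}\eqref{flat_item1}, and adding the dimension count $\dim\Cc_2\kg=n+1$ versus $\tfrac12(\dim\kg+1)=n+\tfrac12(\dim\bg_1+1)$ that rules out a nontrivial Heisenberg factor $\bg_1$. The forward direction via the trivial decomposition $\kg=\kg\redrtimes\zg$ together with Theorem \ref{new1} and Theorem \ref{flat}\eqref{flat_item2} is likewise the intended argument.
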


\begin{proof}
Use Theorems \ref{struct} and \ref{flat}. 
\end{proof}

\begin{remark}
\normalfont
It follows by Theorems \ref{struct} and \ref{flat} that 
every 3-step nilpotent Lie group with 1-dimensional center and with flat generic coadjoint orbits 
is a special nilpotent Lie group in the sense of \cite{Co83}. 
We also mention \cite[Prop. 2.2]{EO03}, which proves by a case-by-case analysis that 
there exist commutative ideal polarizations as in Theorem~\ref{flat}\eqref{flat_item2} for every 
(complex) nilpotent Lie algebra of dimension $\le 7$ with flat generic coadjoint orbits. 

In this context we recall from \cite[Cor. 3.2]{MR91} that every ideal polarization is necessarily abelian. 
In fact, as noted in \cite[Lemma 2.1(1.)]{BC13} even the isotropic ideals of any symplectic Lie algebra are abelian. 
\end{remark}

\section{Examples}\label{Sect6}

In Examples \ref{N4N1}--\ref{N6N6} below 
we assume $\KK=\RR$ and discuss all the examples of 3-step nilpotent Lie algebras of dimension $\le 6$, 
having 1-dimensional centers; 
they correspond to the items N4N1, N5N3, N5N6, N6N1, N6N4, N6N5, and N6N6 from \cite{Pe88}. 
(See also \cite{CGS12} for the classification of the 6-dimensional nilpotent Lie algebras over an arbitrary field.)
It is remarkable that all of the possible situations from the above Theorem~\ref{struct} 
can be illustrated by these low dimensional examples, as we will point out below. 

\begin{example}[Type~($A$)]\label{N4N1} 
\normalfont
Let $\gg=\spa\{X_1,X_2,X_3,X_4\}$ be the 4-dimensional filiform Lie algebra with 
$$[X_4,X_3]=X_2,\ [X_4,X_2]=X_1. $$
Then 
$$\Zc(\gg)=\spa\{X_1\},\ 
\Cc_2\gg=\spa\{X_1,X_2\},\ 
\Cc(\Cc_2\gg:\gg)=\spa\{X_1,X_2,X_3\},$$
hence we may choose $\cg=\spa\{X_2\}$, $\Vc=\spa\{X_4\}$, 
and for   
$$\begin{aligned}
\kg&=\spa\{X_1,X_2,X_4\} \quad\; \text{(Heisenberg algebra)}\\
\ag&=\spa\{X_1,X_3\} \quad\quad\quad \text{(abelian algebra)}
\end{aligned}$$ 
we have $\gg=\kg\redrtimes\ag$ of type~($A$). 
\end{example}

\begin{example}[Type ($A_{+}$)]\label{N5N3}
\normalfont
Let $\gg=\spa\{X_1,X_2,X_3,X_4\}$ with 
$$\begin{aligned}{}
[X_5,X_4]&=X_2,\ [X_5,X_2]=X_1, \\
[X_4,X_3]&=X_1. 
\end{aligned}$$
Then 
$$\Zc(\gg)=\spa\{X_1\},\ 
\Cc_2\gg=\Cc(\Cc_2\gg:\gg)=\spa\{X_1,X_2,X_3\},$$
hence we may choose $\cg=\spa\{X_2,X_3\}$, $\Vc=\spa\{X_4,X_5\}$, 
and $\gg$ is of type~($A_{+}$). 
\end{example}

\begin{example}[Type~($A$)]\label{N5N6}
\normalfont
Let $\gg=\spa\{X_1,X_2,X_3,X_4,X_5\}$ with 
$$\begin{aligned}{}
[X_5,X_4]&=X_3,\ [X_5,X_3]=X_2,\ [X_5,X_2]=X_1, \\
[X_4,X_3]&=X_1. 
\end{aligned}$$
Then 
$$\Zc(\gg)=\spa\{X_1\},\ 
\Cc_2\gg=\spa\{X_1,X_2\},\ 
\Cc(\Cc_2\gg:\gg)=\spa\{X_1,X_2,X_3,X_4\},$$
hence we may choose $\cg=\spa\{X_2\}$, $\Vc=\spa\{X_5\}$, 
and for   
$$\begin{aligned}
\kg&=\spa\{X_1,X_2,X_5\} \quad \text{(Heisenberg algebra)}\\
\ag&=\spa\{X_1,X_3,X_4\} \quad \text{(Heisenberg algebra)}
\end{aligned}$$ 
we have $\gg=\kg\redrtimes\ag$ of type~($A$) with flat generic coadjoint orbits. 
\end{example}

\begin{example}[Mixed type]\label{N6N1}
\normalfont
Let $\gg=\spa\{X_1,X_2,X_3,X_4,X_5,X_6\}$ with 
$$\begin{aligned}{}
[X_6,X_5]&=X_4,\ [X_6,X_4]=X_1,  \\
[X_3,X_2]&=X_1. 
\end{aligned}$$
Then 
$$\Zc(\gg)=\spa\{X_1\},\ 
\Cc_2\gg=\spa\{X_1,X_2,X_3,X_4\}.$$
Since $\Cc_2\gg$ fails to be abelian, more precisely $\Zc(\Cc_2\gg)=\spa\{X_1,X_4\}$, 
we choose its subalgebra 
$$\hg=\spa\{X_1,X_2,X_3\},$$ 
which is a Heisenberg algebra with $\Cc_2\gg=\Zc(\Cc_2\gg)+\hg$. 
Therefore, as in the proof of Theorem~\ref{struct}\eqref{struct_item1}, 
$\gg=\hg\redtimes\Cc(\hg:\gg)$, 
where 
$$\Cc(\hg:\gg)=\spa\{X_1,X_4,X_5,X_6\}. $$
Note that $\Cc(\hg:\gg)$ is isomorphic to the algebra of type~(A) from Example~\ref{N4N1}. 
\end{example}

\begin{example}[Type~($A$)]\label{N6N4}
\normalfont
Let $\gg=\spa\{X_1,X_2,X_3,X_4,X_5,X_6\}$ with 
$$\begin{aligned}{}
[X_6,X_5]&=X_3,\ [X_6,X_4]=X_2,  \\
[X_5,X_2]&=X_1, \\
[X_4,X_3]&=X_1. 
\end{aligned}$$
Then 
$$\Zc(\gg)=\spa\{X_1\},\ 
\Cc_2\gg=\spa\{X_1,X_2,X_3\},\  
\Cc(\Cc_2\gg:\gg)=\spa\{X_1,X_2,X_3,X_6\},$$
hence we may choose $\cg=\spa\{X_2,X_3\}$, $\Vc=\spa\{X_4,X_5\}$, 
and for   
$$\begin{aligned}
\kg&=\spa\{X_1,X_2,X_3,X_4,X_5\} \quad \text{(Heisenberg algebra)}\\
\ag&=\spa\{X_1,X_6\} \quad\quad\quad\quad\quad\quad\  \text{(abelian algebra)}
\end{aligned}$$ 
we have $\gg=\kg\redrtimes\ag$ of type~($A$). 
\end{example}

\begin{example}[Type~($A$)]\label{N6N5}
\normalfont
Let $\gg=\spa\{X_1,X_2,X_3,X_4,X_5,X_6\}$ with 
$$\begin{aligned}{}
[X_6,X_5]&=X_3,\ [X_6,X_4]=X_2,\ [X_6,X_3]=X_1,  \\
[X_4,X_2]&=X_1. 
\end{aligned}$$
Then 
$$\Zc(\gg)=\spa\{X_1\},\ 
\Cc_2\gg=\spa\{X_1,X_2,X_3,\},\  
\Cc(\Cc_2\gg:\gg)=\spa\{X_1,X_2,X_3,X_5\},$$
hence we may choose $\cg=\spa\{X_2,X_3\}$, $\Vc=\spa\{X_4,X_6\}$, 
and for   
$$\begin{aligned}
\kg&=\spa\{X_1,X_2,X_3,X_4,X_6\} \quad \text{(the algebra of type ($A_{+}$) from Example~\ref{N5N3})}\\
\ag&=\spa\{X_1,X_5\} \quad\quad\quad\quad\quad\quad\  \text{(abelian algebra)}
\end{aligned}$$ 
we have $\gg=\kg\redrtimes\ag$ of type~($A$). 
\end{example}

\begin{example}[Type~($A$)]\label{N6N6}
\normalfont
Let $\gg=\spa\{X_1,X_2,X_3,X_4,X_5,X_6\}$ with 
$$\begin{aligned}{}
[X_6,X_5]&=X_3,\ [X_6,X_4]=X_2,  \\
[X_5,X_3]&=X_1, \\
[X_4,X_2]&=X_1. 
\end{aligned}$$
Then 
$$\Zc(\gg)=\spa\{X_1\},\ 
\Cc_2\gg=\spa\{X_1,X_2,X_3\},\  
\Cc(\Cc_2\gg:\gg)=\spa\{X_1,X_2,X_3,X_6\},$$
hence we may choose $\cg=\spa\{X_2,X_3\}$, $\Vc=\spa\{X_4,X_5\}$, 
and for   
$$\begin{aligned}
\kg&=\spa\{X_1,X_2,X_3,X_4,X_5\} \quad \text{(Heisenberg algebra)}\\
\ag&=\spa\{X_1,X_6\} \quad\quad\quad\quad\quad\quad\  \text{(abelian algebra)}
\end{aligned}$$ 
we have $\gg=\kg\redrtimes\ag$ of type~($A$). 

Note that the above algebras $\kg$ and $\ag$ are the same as to the ones 
that occur in Example~\ref{N6N4}, and yet the present algebra $\gg$ is not isomorphic to the one 
from that example. 
This is due to the fact that mappings $\kg\times\ag\to\kg$ defined by the Lie bracket 
in the two examples are different from each other.  
\end{example}

\begin{example}\label{cohomology}
\normalfont 
For the sake of completeness, we recall here the method to construct 
3-step nilpotent Lie algebras with 1-dimensional centers and with flat coadjoint orbits, 
by starting from 2-step nilpotent Lie algebras with symplectic structures; 
see \cite{GKM04} for differential geometric implications of this and some related constructions. 

Let 
$\gg_0$ be a Lie algebra with a skew-symmetric bilinear form
$\omega\colon\gg_0\times\gg_0\to\KK$  satisfying 
$\omega([X,Y],Z)+\omega([Y,Z],X)+\omega([Z,X],Y)=0$ for all $X,Y,Z\in\gg$, 
that is, we have a scalar 2-cocycle $\omega\in Z^2(\gg_0,\KK)$, . 
Then $\gg:=\gg_0\dotplus_\omega\KK$ is the Lie algebra with the bracket 
$[(X_1,t_1),(X_2,t_2)]=([X_1,X_2],\omega(X_1,X_2))$ for all $X_1,X_2\in\gg_0$ and $t_1,t_2\in\KK$. 
If we define $\gg_0^{\perp_\omega}=\{X\in\gg_0\mid\omega(X,\gg_0)=0\}$, 
then it is easily seen that $\gg$ has 1-dimensional center if and only if 
$\Zc(\gg_0)\cap \gg_0^{\perp_\omega}=0$. 
In particular, this is the case if $\omega$ is a symplectic form, hence in addition to $\omega\in Z^2(\gg_0,\KK)$ 
it also satisfies $\gg_0^{\perp_\omega}=0$. 
We also note that if $\gg_0$ is an $n$-step nilpotent Lie algebra, then $\gg$ is $(n+1)$-step nilpotent Lie algebra. 
Moreover, if the center of $\gg$ is 1-dimensional 
then there exists an isomorphism of Lie algebras $\gg_0\simeq\gg/\Zc(\gg)$, 
hence the isomorphism class of $\gg$ is uniquely determined by the isomorphism class of $\gg_0$, 
in the sense that if $\gg_1/\Zc(\gg_1)\not\simeq\gg_2/\Zc(\gg_2)$, then $\gg_1\not\simeq\gg_2$.  

It follows by these remarks that one can construct nonisomorphic 3-step nilpotent Lie algebras with 1-dimensional center and flat generic coadjoint orbits by starting from nonisomorphic 2-step nilpotent Lie algebras endowed with symplectic structures. 
Examples of such kind of 2-step nilpotent Lie algebras over $\KK=\RR$ 
can be found in \cite{DT00}, \cite{GKM04}, and \cite{PT09}. 
See also \cite{Bu06} for examples of higher nilpotency order over $\KK=\CC$. 
\end{example}


\textbf{Acknowledgment.}  
We wish to thank the Referee for valuable suggestions that improved the presentation of this paper. 
This research has been partially supported by the Grant
of the Romanian National Authority for Scientific Research, CNCS-UEFISCDI,
project number PN-II-ID-PCE-2011-3-0131, 
and by Project MTM2013-42105-P, DGI-FEDER, of the MCYT, Spain.

\end{document}